\documentclass[11pt,letterpaper]{amsart}

\newtheorem{theorem}{Theorem}[section]
\newtheorem{lemma}[theorem]{Lemma}

\theoremstyle{definition}
\newtheorem{definition}[theorem]{Definition}

\newtheorem{proposition}[theorem]{Proposition}

\theoremstyle{remark}
\newtheorem{remark}[theorem]{Remark}

\numberwithin{equation}{section}

 \newcommand\dd{\,\mathrm{d}}

\usepackage[colorlinks,linkcolor=cyan,citecolor=black]{hyperref}

\theoremstyle{plain}

\begin{document}

\title[A continuity method for the Dirichlet problem]{On a continuity method for Dirichlet problem of Hessian equations}


 \author{Rirong Yuan}
 \address{School of Mathematics, South China University of Technology, Guangzhou 510641, China}
 \email{yuanrr@scut.edu.cn}



\date{}

\dedicatory{}

\commby{}

\begin{abstract}
   
   In this paper, we  develop a continuity method for the Dirichlet problem of Hessian equations on Riemannian manifolds. Such equations, introduced by Caffarelli, Nirenberg and Spruck, are defined in terms of the eigenvalues of the Hessian and a given pair 
   $(f,\Gamma)$, where 
    $f$ is a symmetric function defined in a symmetric cone 
$\Gamma\subset \mathbb{R}^n$, and 
  $\Gamma$ specifies the set of admissible eigenvalues for the solution. Our method combines techniques from Morse theory with a characterization of the pair  $(f,\Gamma)$. More precisely, in the type 2 case, we first construct admissible functions using Morse theory, and then solve the Dirichlet problem without any additional assumptions on the boundary or the subsolution. Building on this characterization of the pair, we can approximate the type 1 equation by a family of type 2 equations.
\end{abstract}

\maketitle



  \section{Introduction}

 Let $(\overline{M}, g)$ be a $n$-dimensional  compact connected  Riemannian manifold   with smooth boundary $\partial M$. Denote $\overline{M}:=M\cup\partial M$, where  $n\geq 2$ and $M$ is the interior of $\overline{M}$. Let $\chi$ be a smooth symmetric $(0,2)$-tensor on $\overline{M}$, let    
 $\psi\in C^\infty(\overline{M})$ and 
 $\varphi\in C^{\infty}(\partial M)$.      
 Moreover, denote $\vec{\bf1}=(1,1,\cdots,1)\in\mathbb{R}^n.$
 
 In this paper, we propose a  continuity method for the Dirichlet problem 
 \begin{equation}
 	\label{equ1-real}
 	\begin{aligned}
 		F(\nabla^2u+\chi)
 		=
 		\psi \textrm{ in } \overline{M}, 
 	\end{aligned}
 \end{equation}
 \begin{equation}
 	\label{boundary-data1}
 	\begin{aligned}
 		u=\varphi \textrm{ on } \partial M,
 	\end{aligned}
 \end{equation}
 where $\nabla^2u$ is the real Hessian of $u$ with respect to the Levi-Civita connection $\nabla$ of $g$. 
 The  nonlinear operator 
 $F(\nabla^2u+\chi)$   is generated by a pair  $(f,\Gamma)$  and   has the form 
 \begin{equation}
 	\begin{aligned}
 		F(\nabla^2u+\chi)
 		= 	f(\lambda[\nabla^2u+\chi]), \nonumber
 	\end{aligned}
 \end{equation}
 where $\lambda[\nabla^2u+\chi]$  is an $n$-tuple of 
 eigenvalues of $\nabla^2u+\chi$ with  respect to $g$, and 
 $f$  is a smooth symmetric function of $n$ real variables, defined in an open symmetric  convex cone $\Gamma$  with vertex at the origin, and
 $$\Gamma_n:= \{(\lambda_1,\cdots,\lambda_n)\in \mathbb{R}^n: \mbox{ each } \lambda_i>0\}\subseteq\Gamma,$$
 as well as
 $\partial \Gamma\neq\emptyset$.  
 The function   
 $f$ is supposed to satisfy 
 \begin{equation}
 	\label{concave}  
 	\begin{aligned} 
 		f \mbox{ is a concave function in } \Gamma,	
 	\end{aligned}
 \end{equation} 
 \begin{equation}
 	\label{elliptic}
 	\begin{aligned}
 		f_{i}(\lambda)
 		=  f_{\lambda_i}(\lambda) :=\frac{\partial f}{\partial \lambda_{i}}(\lambda)
 		> 0  \mbox{ in } \Gamma, \mbox{  } \forall  1\leq  i\leq  n.
 	\end{aligned}
 \end{equation}
 
 \begin{definition}
 	\label{def1-admissiblefunction-complex}
 	For \eqref{equ1-real}, following \cite{CNS3} we say that $u\in C^{2}(\overline{M})$ is an admissible function if 
 	$\lambda[\nabla^2u+\chi]\in \Gamma$.

 \end{definition}
 
 The equation \eqref{equ1-real} includes the     Monge-Amp\`ere equation, 
 and more general $k$-Hessian equation
 corresponding to
 \begin{equation}	f=\sigma_k^{1/k}, \,\, \Gamma=\Gamma_k, \nonumber\end{equation} 
 where  $\sigma_k$ is the $k$-th elementary symmetric function,
 and $\Gamma_k$ is the  $k$-th G{\aa}rding cone. 
 Recently, an example introduced by \cite{Guan2021Zhang} has attracted considerable interest: 
 \begin{equation}	\label{Guan-Zhang-3}	
 	\begin{aligned} 
 		f= \frac{\sigma_{k}}{\sigma_{k-1}}		 	-\sum_{j=0}^{k-2} \frac{\beta_j \sigma_j}{\sigma_{k-1}},  \,\, \forall	 \beta_j\geq0,\,  	  \sum_{j=0}^{k-2}  \beta_j >0,
 		\,  \Gamma=\Gamma_{k-1}.
 	\end{aligned}
 \end{equation} 
 Moreover, the equation \eqref{equ1-real} also includes the following as a special case 
 \begin{equation}
 	\label{equ2-n-1}
 	\begin{aligned}
 		{f}(\lambda[ \Delta u \cdot g-  \nabla^2u +\chi])
 		=\psi. 
 	\end{aligned}
 \end{equation}
 When $ {f}(\lambda)=\sum_{i=1}^n \log\lambda_i$,
 it is a Monge-Amp\`ere equation for $(n-1)$-PSH functions in the sense of   \cite{Harvey2011Lawson}. 
 Such a type of equation is of special interest in complex geometry, since its close connection to form-type Calabi-Yau equation \cite{FuWangWuFormtype2010} and Gauduchon's conjecture \cite{Popovici2015,Tosatti2019Weinkove}.

 When $\chi=0$ and $M=\Omega\subset\mathbb{R}^n$, the Dirichlet problem  was initially treated 
 by Caffarelli-Nirenberg-Spruck \cite{CNS3}  under  some additional assumptions 
 on    $f$, including  
 \begin{equation}
 	\label{addistruc}
 	\begin{aligned} 
 		\lim_{t\rightarrow +\infty}f(t\lambda)=+\infty, 
 		\mbox{ for any } \lambda \in \Gamma,
 	\end{aligned}
 \end{equation} 
 and the unbounded condition
 \begin{equation}
 	\label{unbounded-1}
 	\begin{aligned}
 		\lim_{t\rightarrow+\infty}  f(\lambda_1,\cdots,\lambda_{n-1},\lambda_n+t)=+\infty, \,\, \forall\lambda=(\lambda_1,\cdots,\lambda_n)\in\Gamma.
 	\end{aligned}
 \end{equation}   
 Similar results were also obtained in literature  \cite{Krylov83,Ivochkina83}. 
 Caffarelli-Nirenberg-Spruck's result
 has been further extended by Trudinger \cite{Trudinger95}  to the bounded case. 
 In addition, their works requires  certain assumption on the principal curvatures of $\partial\Omega$. 
 Without geometric restriction to boundary,   as developed by \cite{Guan1994The,Guan1993Spruck} and  others, 
 in general the 
 existence result for Dirichlet problem requires  
 an appropriate   subsolution assumption:

 \begin{definition} 
 	[Subsolution]
 	We say that $\underline{u}\in C^2(\overline{M})$ is an admissible subsolution to the Dirichlet problem \eqref{equ1-real}-\eqref{boundary-data1}, if it is an admissible function and satisfies 
 	\begin{equation}
 		\label{subsolution1-complex}
 		\begin{aligned}
 			f(\lambda[ \nabla^2\underline{u} +\chi]) \geq \psi \textrm{ in } \overline{M}, \,\,
 			\underline{u}=\varphi \textrm{ on } \partial M. \nonumber
 		\end{aligned}
 	\end{equation}
 \end{definition}

 Condition \eqref{elliptic} ensures the equation \eqref{equ1-real} to be elliptic at any admissible solutions,
 while \eqref{concave} implies that the operator $F(A)=f(\lambda(A))$ is concave with respect to $A$ when $\lambda(A)\in\Gamma$.
 Therefore, under the assumption  
 \begin{equation}
 	\label{nondegeneracy1}
 	\begin{aligned}
 		\inf_M	\psi   > \sup_{\partial\Gamma} f, 
 		\textrm{ where } \underset{\partial \Gamma}\sup f =		\underset{\lambda_{0}\in \partial \Gamma}\sup \underset{\lambda\rightarrow \lambda_{0}}\limsup f(\lambda),
 	\end{aligned}
 \end{equation}
 once the following  a priori estimate  of  the  Hessian 
 is in place,   
 \begin{equation}
 	\label{estimate1-Hessian-real}
 	\begin{aligned}
 		\sup_{\overline{M}}	|\nabla^2 u|\leq C, 
 	\end{aligned}
 \end{equation}
 the higher regularities follow from the Evans-Krylov theorem and  Schauder theory.  
 Building on this, one can use standard continuity method to derive the existence. 
 
 This paper is devoted to developing a continuity method for the Dirichlet problem. 
 First we review some notation.
 For a given pair
 $(f,\Gamma)$, as in \cite{yuan1-closed}  
 denote 
 \begin{equation}
 	\label{component1}
 	\begin{aligned}
 		\Gamma_{\mathcal{G}}^{f} =
 		\big\{\lambda\in\Gamma: \lim_{t\to +\infty}f(t\lambda)>-\infty \big\}. \nonumber
 	\end{aligned}
 \end{equation}
 In addition, 
 $\mathring{\Gamma}_{\mathcal{G}}^{f}$
 denotes the interior of $\Gamma_{\mathcal{G}}^{f}$.
 For  
 ${\Gamma}_{\mathcal{G}}^{f}$, 
 as in  \cite{yuan-PUE2-note,yuan-PUE4} define
 \begin{equation}
 	\label{def2-varrho-Gamma}
 	\begin{aligned}
 		(1,\cdots,1, 1-\varrho_{{\Gamma}_{\mathcal{G}}^f}) \in    \partial\Gamma_{\mathcal{G}}^{f}.
 	\end{aligned}
 \end{equation}  
 Following  the convention in \cite{CNS3}, 
 as in
  \cite{yuan1-closed,yuan-PUE4} we say that  $(f,\Gamma)$ is a type 2 pair if 
 \begin{equation}
 	\label{assumption1-type2}
 	\begin{aligned}
 		(0,\cdots,0,1)\in\mathring{\Gamma}_{\mathcal{G}}^{f}.
 	\end{aligned}
 \end{equation}
 Otherwise, it is a type 1 pair. Accordingly, we call \eqref{equ1-real} a type 2 equation if the corresponding pair $(f,\Gamma)$ satisfies \eqref{assumption1-type2}. Otherwise it is a type 1 equation.

 \begin{remark}
 	
 	When $f$ satisfies \eqref{addistruc}, we know ${\Gamma}_{\mathcal{G}}^{f}=\Gamma$.
 	 	Notably, $\mathring{\Gamma}_{\mathcal{G}}^{f}\neq\Gamma_n \iff \varrho_{{\Gamma}_{\mathcal{G}}^f}>1.$
 	For \eqref{Guan-Zhang-3}, we have $\mathring{\Gamma}_{\mathcal{G}}^{f}=\Gamma_k$ and $\varrho_{{\Gamma}_{\mathcal{G}}^f}=\frac{n}{k}$. 

 \end{remark}
 
 The type 2 pair is much more simpler, as detailed in Lemma \ref{lemma1-PUE}. This is a  special case of the partial uniform ellipticity developed by the author in a series of works \cite{yuan2020conformal,yuan-PUE-conformal,yuan-PUE4}. 
 In this type 2 case, 
 together with a Morse theory technique that was first introduced by \cite{yuan-PUE-conformal} as a tool to construct admissible functions,  we obtain the existence result for the Dirichlet problem   without extra assumptions on the boundary or the subsolution.

 \begin{theorem}
 	\label{thm1-type2}
 	In addition to \eqref{concave}, \eqref{elliptic}  and \eqref{nondegeneracy1},
 	we assume 
 	that $(f,\Gamma)$ is of type 2.
 	Then  the 
 	Dirichlet problem \eqref{equ1-real}-\eqref{boundary-data1} has a unique smooth admissible solution.  
 \end{theorem}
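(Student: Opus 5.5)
The plan is a continuity method whose starting point is an admissible function built by Morse theory; no subsolution is assumed. Type 2 means $(0,\dots,0,1)\in\mathring{\Gamma}_{\mathcal{G}}^f$. By the partial uniform ellipticity in Lemma \ref{lemma1-PUE}, this should give two things. First, a vector $\lambda$ lies in $\Gamma$, and $f(\lambda)$ is bounded below by a prescribed constant, as soon as its largest component is large compared with the absolute values of the others. Second, $f_i(\lambda)\geq\theta\sum_j f_j(\lambda)$ for every $i$ on each sublevel set $\{\lambda\in\Gamma : f(\lambda)\le C\}$.

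\textbf{Step 1: construct a subsolution.} Take a Morse function $v$ on $\overline{M}$ with no critical points in $\overline{M}$; this is possible because $M$ is connected with nonempty boundary, so critical points can be pushed out through $\partial M$. Set $w=e^{Av}$, so that
$$\nabla^2 w=Ae^{Av}\big(\nabla^2 v+A\,dv\otimes dv\big).$$
Since $|dv|\ge c_0>0$, the matrix $\nabla^2 w+\chi$ has one eigenvalue of order $A^2e^{Av}$ and all other eigenvalues $O(Ae^{Av})$. By the type 2 property, for $A$ large $\lambda[\nabla^2w+\chi]\in\Gamma$ and $f(\lambda[\nabla^2 w+\chi])\ge\sup_M\psi$. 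To fix the boundary values, let $h$ be a smooth extension of $\varphi$ and $d$ a cutoff of the distance to $\partial M$. Take $\underline{u}=h+t\,(w-\sup_{\partial M} w)$, corrected near $\partial M$ by a term $-Kd+Bd^2$, or more simply $\underline{u}=h+t\big(e^{Av}-\text{(its boundary values extended)}\big)$, with $t,A$ large; in each case the $(A\,dv)^2$ direction dominates. I would check carefully that boundary matching does not destroy the dominant direction. Using $e^{Av}-e^{A\tilde v}$ with $\tilde v$ a smooth extension of $v|_{\partial M}$ fails, so I would instead use a function of the form $e^{A v}-e^{A\max_{\partial M}v}$ plus harmonic-type corrections. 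This step is the one I expect to be most delicate.

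\textbf{Step 2: a priori estimates.} For admissible solutions of $F(\nabla^2u+\chi)=\psi$ with $\underline{u}$ a subsolution, I would prove $C^0$, $C^1$ and $C^2$ bounds.
\begin{itemize}
\item $C^0$: the maximum principle gives $\underline u\le u$, and the upper barrier is the solution of $\operatorname{tr}(\nabla^2 u+\chi)=0$, since $\Gamma\subset\Gamma_1$.
\item Boundary gradient: this follows from these two barriers.
\item Interior gradient and global second derivatives: uniform ellipticity on sublevel sets (Lemma \ref{lemma1-PUE}) makes the linearized operator comparable to the Laplacian, with $\sum f_i$ bounded above and below on the relevant set by \eqref{nondegeneracy1}. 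The equation therefore behaves like a uniformly elliptic concave equation, and the standard arguments apply, including boundary double-normal estimates via the subsolution barrier as in Guan.
\end{itemize}
This gives $|\nabla^2u|\le C$, and Evans--Krylov and Schauder then give $C^{2,\alpha}$ bounds.

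\textbf{Step 3: continuity path and uniqueness.} Consider
$$F(\nabla^2u^s+\chi)=(1-s)F(\nabla^2\underline u+\chi)+s\psi,\qquad u^s=\varphi \text{ on } \partial M.$$
At $s=0$ the solution is $\underline u$. For all $s$, $\underline u$ remains a subsolution, so the estimates are uniform. Openness comes from the implicit function theorem, since the linearized operator is invertible by the maximum principle. Closedness comes from the uniform estimates, yielding a solution at $s=1$. Uniqueness follows from the comparison principle, using ellipticity and concavity.
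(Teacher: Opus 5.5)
Your Step 1 is not merely delicate; as written it is missing, and Steps 2 and 3 rest on it. The lower barrier in your $C^0$ and boundary-gradient bounds, and the starting point $s=0$ of a path with \emph{fixed} boundary data $\varphi$, all require a smooth admissible $\underline u$ with $\underline u=\varphi$ on $\partial M$. Neither of your candidates provides one.

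In $h+t(w-\sup_{\partial M}w)-Kd+Bd^2$ the correction vanishes on $\partial M$, so the boundary values stay $\varphi+t(w-\sup_{\partial M}w)$. This equals $\varphi$ only if $v$ is constant on $\partial M$, which would force an interior extremum of $v$, i.e.\ a critical point.

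In $e^{Av}-e^{A\max_{\partial M}v}+H$ with $H$ harmonic, $\nabla^2H$ is traceless. Near boundary points where $v$ is well below $\max_{\partial M}v$, it is typically exponentially larger than $A^2e^{Av}\,dv\otimes dv$. There $\lambda[\nabla^2\underline u+\chi]$ is large with relatively negligible trace, hence outside $\Gamma$ whenever $\Gamma\neq\Gamma_1$.

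The idea you are missing is that near $\partial M$ the dominant direction must come from the distance function $\sigma$, not from $v$. For $\underline w=\varphi+2\log\frac{\delta^2}{\delta^2+\sigma}$ one has $\nabla^2\underline w=\nabla^2\varphi+\frac{2\,d\sigma\otimes d\sigma}{(\delta^2+\sigma)^2}-\frac{2\nabla^2\sigma}{\delta^2+\sigma}$. The $d\sigma\otimes d\sigma$ term dominates the rest by a factor of order at least $(\delta^2+\delta)^{-1}$ in $M_\delta$. So by the type 2 assumption and Lemma \ref{lemma2-key}, $\underline w$ is a strict subsolution in $M_\delta$ for small $\delta$, with $\underline w=\varphi$ on $\partial M$.

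The paper never builds a global subsolution with boundary values $\varphi$. It uses the function $\underline v$ of \eqref{const-admissible2} (your $e^{Av}$) only as a comparison function, and deforms the boundary data along the path:
\[
f(\lambda[\nabla^2u^t+\chi])=t\psi+(1-t)f(\lambda[\nabla^2\underline v+\chi]) \ \text{ in } \overline{M},\qquad u^t=t\varphi+(1-t)\underline v \ \text{ on } \partial M,
\]
so that $u^0=\underline v$. The estimates then go as follows.
\begin{enumerate}
\item Since $f(\lambda[\nabla^2\underline v+\chi])>\psi$, $\underline v$ satisfies the differential inequality for every $t$, and comparison gives the uniform lower bound \eqref{lower-bound1}.
\item That bound is exactly what ensures $u\geq\underline w$ on the inner boundary $\{\sigma=\delta_0\}$ of the collar, where $\delta_0=\min\{\delta_2,\delta_3\}$ as in Subsection \ref{subsec3-boundary-c1}. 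The collar barrier then yields \eqref{boundary-gradient-1}.
\item The double normal bound comes from $F^{nn}\geq\theta\sum_iF^{ii}$ (Lemma \ref{lemma1-PUE}) together with concavity tested against $\underline v$.
\end{enumerate}

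If you want to keep your fixed-boundary path, you can complete Step 1 with the same barrier. Choose $C$ with $\underline v-C<\varphi$ on $\partial M$. Then take $\delta$ so small that $2\log\frac{1+\delta}{\delta}$ exceeds the oscillation of $\underline v-\varphi$ near $\partial M$, so that $\underline v-C>\underline w$ on $\{\sigma=\delta\}$. Take a regularized maximum of $\underline w$ and $\underline v-C$ on $M_\delta$, with smoothing parameter below both gaps, and extend it by $\underline v-C$. This is a smooth admissible subsolution equal to $\varphi$ on $\partial M$. The reason is that its Hessian is a convex combination of the two Hessians plus a nonnegative rank-one term, and $F$ is concave and elliptic.

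A minor correction to Step 2: $\sum_if_i$ need not be bounded above on level sets. For the type 2 pairs of Lemma \ref{proposition2-Gamma} built from $\sigma_2^{1/2}$, $\sum_i\tilde f_i=\frac{1}{n-\varrho}\sum_jf_j$ is unbounded there. What the estimates use is the ratio bound $f_i\geq\theta\sum_jf_j$, which holds on all of $\Gamma$.
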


 This is a fully nonlinear analogue of 
 existence theorem for Poisson's equation. 
 Significantly,   in the type 2 case  we can solve the Dirichlet problem 
 without any extra assumption regarding   admissible
 subsolution  or admissible function. 
 This stands in contrast with vast existing literature.

 On the other hand, 
 using the constant $\varrho_{{\Gamma}_{\mathcal{G}}^f}$, 
 we can give a characterization of $(f,\Gamma)$ in 
 Lemmas \ref{proposition2-Gamma} and \ref{lemma1-PUE}. 
 Thus we can recast   \eqref{equ1-real} 
 in the following form:
 \begin{equation}
 	\label{equ2-n-varrho}
 	\begin{aligned}
 		\tilde{f}(\lambda[ \Delta u \cdot g-\varrho \nabla^2u+\tilde{\chi}])
 		=\psi,  
 	\end{aligned}
 \end{equation}
 corresponding to $(\tilde{f},\tilde{\Gamma})$, 
 and vice versa,
 where  $$ \varrho\leq \varrho_{\tilde{\Gamma}_{\mathcal{G}}^{\tilde{f}}},  \varrho\neq0 
 \textrm{ and } \tilde{\chi}=(\mathrm{tr}_{g}\chi)\cdot g-\varrho \chi.$$ 
 In the critical case $\varrho=1$,  
 it reads as   \eqref{equ2-n-1}.
 Significantly,  
 when $\varrho<\varrho_{\tilde{\Gamma}_{\mathcal{G}}^{\tilde{f}}}$ and $ \varrho\neq0$ 
 the equation
 \eqref{equ2-n-varrho} 
 can be reformulated as
 an  equation \eqref{equ1-real}  of type 2, and vice versa. 
 See Proposition \ref{thm2-equivalent-equations}.
 Motivated by 
 this, in  Theorem \ref{thm1-approx} we  propose an approximation of a pair $(f,\Gamma)$ via type 2 pairs $(f^t,\Gamma^t)$.  
 An analogous approximation was also considered in \cite{Gursky-2003jdg-Viaclovsky,Li2003YYLi}.
 Our strategy is different and new. It is based on the  applications of Morse theory
 to fully nonlinear equations,
  which was 
  first observed by the author \cite{yuan-PUE-conformal}  
 (see also \cite{yuan-PUE2-note,yuan-PUE4,yuan-PUE3,yuan-construction4,yuan5-construction}).

 By integrating Theorems \ref{thm1-type2} and 
 \ref{thm1-approx}, 
 we propose  a 
 continuity method for the Dirichlet problem and apply it to establish a proof of the following result. This is a real analogue of \cite[Theorem 1.6]{yuan1-closed}.
 \begin{theorem}
 	\label{thm1-exietnce-Dirichlet-real}
 	Let $f$ satisfy \eqref{concave}, \eqref{elliptic}, \eqref{nondegeneracy1} and    \eqref{unbounded-1}.
 	Suppose that there exists an admissible subsolution $\underline{u}\in C^2(\overline{M})$ to  Dirichlet problem 
 	\eqref{equ1-real}-\eqref{boundary-data1}. 
 	Then the Dirichlet problem 
 	admits a unique smooth admissible solution.
 \end{theorem}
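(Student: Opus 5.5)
The plan is to use the continuity method described before the statement. We approximate $(f,\Gamma)$ by the type 2 pairs $(f^t,\Gamma^t)$ of Theorem \ref{thm1-approx}, solve each approximate Dirichlet problem by Theorem \ref{thm1-type2}, and pass to the limit $t\to 0$ (or $t\to\infty$, depending on the parametrization) using uniform a priori estimates. The uniform estimates come from the subsolution $\underline{u}$ and the unbounded condition \eqref{unbounded-1}.

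\textbf{Approximation.} Through the characterization in Lemmas \ref{proposition2-Gamma} and \ref{lemma1-PUE} and Proposition \ref{thm2-equivalent-equations}, $(f^t,\Gamma^t)$ should have the following properties:
\begin{itemize}
\item $\Gamma\subset\Gamma^t$, and $\Gamma^t$ decreases to $\Gamma$;
\item $f^t$ is concave and elliptic, and $f^t\to f$ locally smoothly on $\Gamma$;
\item $f^t\ge f$ on $\Gamma$, or at least $f^t\ge f-\epsilon_t$ with $\epsilon_t\to0$;
\item $\sup_{\partial\Gamma^t}f^t<\inf\psi$ for small $t$.
\end{itemize}
Concretely, I expect $f^t(\lambda)=f(\lambda+t\,\mathrm{tr}(\lambda)\vec{\bf 1})$ (up to normalization), which corresponds to the form \eqref{equ2-n-varrho} with $\varrho<\varrho_{\Gamma_{\mathcal G}^f}$.

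Applying Theorem \ref{thm1-type2} to $f^t(\lambda[\nabla^2u+\chi])=\psi$ with $u=\varphi$ on $\partial M$ gives a smooth $\Gamma^t$-admissible solution $u^t$. If $f^t\ge f$ is not exact, we replace $\psi$ by $\psi+\epsilon_t$ so that $\underline{u}$ remains a subsolution for every $t$.

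\textbf{Uniform estimates.} Since $\underline{u}$ is admissible for $f^t$ (because $\Gamma\subset\Gamma^t$) and is a subsolution, the comparison principle gives $u^t\ge\underline{u}$. An upper bound comes from the harmonic-type supersolution $h$ with $\Delta h+\mathrm{tr}\chi=0$, using $\Gamma^t\subset\{\sum\lambda_i>0\}$; this gives the $C^0$ and boundary gradient bounds.

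Next we need the global gradient and the second-order estimates $\sup|\nabla^2 u^t|\le C$ with $C$ independent of $t$. Here we follow the Guan-type argument. Boundary tangential derivatives come from the boundary data. Mixed derivatives use the barrier built from $\underline u-u^t$ together with a distance function. For the interior, we apply the maximum principle to $\log\lambda_{\max}+\phi(|\nabla u|^2)+\psi(\underline u-u)$, using concavity and the key inequality
\[
\sum f^t_i(\lambda)(\underline{\lambda}_i-\lambda_i)\ge \varepsilon\Big(1+\sum f^t_i\Big),
\]
valid away from a compact set. This inequality needs uniform control in $t$, which we get from the locally uniform convergence $f^t\to f$ and the fact that $\underline\lambda$ ranges over a compact subset of $\Gamma$.

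\textbf{Main obstacle.} The hardest step is the double normal estimate $|\nabla_{nn}u^t|\le C$ on $\partial M$ uniformly in $t$. This is where \eqref{unbounded-1} enters. I would follow Trudinger/Guan: show that on $\partial M$ the tangential part $\lambda'[\nabla^2u^t+\chi]|_{T\partial M}$ stays in a compact subset of the projected cone $\Gamma_\infty'$. To do this I would minimize over $\partial M$ the quantity measuring the distance of $\lambda'$ to $\partial\Gamma'_\infty$ and build a local barrier, with constants uniform in $t$ because the $(n-1)$-dimensional projections of $\Gamma^t$ converge to that of $\Gamma$. Once the tangential eigenvalues are controlled, \eqref{unbounded-1} forces $\nabla_{nn}u^t$ to be bounded, since otherwise $f^t\to\infty>\psi$.

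\textbf{Limit.} With uniform $C^2$ bounds, $\lambda[\nabla^2u^t+\chi]$ stays in a compact set. Since $f^t\to f$ and $\psi>\sup_{\partial\Gamma}f$, the limit is uniformly in a compact subset of $\Gamma$, so the equations are uniformly elliptic and concave uniformly in $t$. Evans–Krylov and Schauder theory then give $C^{k,\alpha}$ bounds, and a subsequence converges to a smooth admissible solution of \eqref{equ1-real}-\eqref{boundary-data1}. Uniqueness follows from the comparison principle.
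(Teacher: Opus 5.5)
Your overall scheme matches the paper's. You approximate by the type 2 pairs of Theorem \ref{thm1-approx}, solve each problem with Theorem \ref{thm1-type2}, keep $\underline u$ as a subsolution (via $\Gamma\subseteq\Gamma^t$ and $f^t\ge f$), and let $t\to1^+$. Your guess for $f^t$ is exact: unwinding $P^{-1}$ gives $f^t(\lambda)=f\big(\lambda+\frac{t-1}{n-\varrho}(\sum_i\lambda_i)\vec{\bf 1}\big)$ with $\varrho=\varrho_{\tilde{\Gamma}_{\mathcal{G}}^{\tilde{f}}}$, so $\sup_{\partial\Gamma^t}f^t=\sup_{\partial\Gamma}f$.

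The genuine gap is the global gradient bound, uniform in $t$, which nothing in your plan produces. The maximum principle for $\log\lambda_{\max}+\phi(|\nabla u|^2)+\cdots$ and the barrier for the mixed derivatives both take $\sup_M|\nabla u^t|$ as input; the mixed estimate is only $|\nabla_{\alpha n}u|\le C(1+\sup_M|\nabla u|)$. Bernstein-type gradient estimates on general Riemannian manifolds need extra structure, such as \eqref{addistruc} or curvature conditions, and none is assumed here. The type 2 estimates of Proposition \ref{thm1-c1-c2-local-estimates} also cannot be borrowed: they depend on $\delta_0^{-1}$, and $\delta_0\to0$ as $t\to1^+$ because $(0,\dots,0,1)\in\partial\Gamma_{\mathcal G}^f$.

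The paper closes this loop by making the $C^2$ bound quantitative, $\sup_{\overline M}|\nabla^2u|\le C(1+\sup_M|\nabla u|^2)$. It then obtains $\sup_M|\nabla u|\le C$ by the blow-up argument (Dinew--Ko{\l}odziej, Sz\'ekelyhidi, and \cite{yuan1-closed} without \eqref{addistruc}), which needs exactly this quadratic dependence. The crux is the double normal bound \eqref{boundary-estimate2-quantitative}, $\mathfrak g_{nn}(x_0)\le C(1+\sum_\alpha|\mathfrak g_{\alpha n}(x_0)|^2)$. To prove it, one splits $A(R)=A'(R)+A''(R)$ using the CNS parameter $t_0$ with $(1-t_0)^{-1}\le C$. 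One then uses \eqref{unbounded-1} on $A'$ and Lemma \ref{lemma1-bdyestimate} on $A''$. Lemma \ref{yuan's-quantitative-lemma} locates the eigenvalues of $A'(R)$ once $R$ exceeds a multiple of $\sum_\alpha|\mathfrak g_{\alpha n}|^2$. Your step ``once the tangential eigenvalues are controlled, \eqref{unbounded-1} forces $\nabla_{nn}u^t$ to be bounded'' hides this dependence on $\mathfrak g_{\alpha n}$, so it cannot feed a blow-up argument.

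Your stated reason for uniformity in that step is also false. For $t>1$ one has $(0,\dots,0,1)\in\Gamma^t$, so the projected cone of $\Gamma^t$ is all of $\mathbb R^{n-1}$ and does not converge to $\Gamma_\infty$. The tangential control has to be proved relative to the fixed cone $\Gamma_\infty$ of $\Gamma$. Uniformity must come from the explicit form $f^t(\lambda)=f(\lambda+c_t\,\mathrm{tr}(\lambda)\vec{\bf 1})$ with $c_t\to0$, not from convergence of cones.
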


 In contrast to the usual approaches, 
 our method leverages the a priori estimates directly, eliminating the necessity of smooth approximations to the $C^2$-regularity admissible subsolution.
 In the unbounded-condition case, a similar approach also works on closed manifolds.

 Definitely not too surprisingly, things become  more subtle in the degenerate case 
 \begin{equation}
 	\label{de-RHS}
 	\begin{aligned} 
 		\inf_M \psi=\sup_{\partial\Gamma}f.
 	\end{aligned} 
 \end{equation}
 The optimal regularity one can expect  is  $C^{1,1}$. For a type 2 degenerate equation, we derive $C^{1,1}$-weak solution to the Dirichlet problem.
 To this end, we further assume 
 \begin{equation}
 	\label{continuity1}
 	\begin{aligned}
 		f\in C^\infty(\Gamma)\cap C(\overline{\Gamma}), \mbox{ where } \overline{\Gamma} =\Gamma\cup\partial\Gamma.
 	\end{aligned}
 \end{equation}

 \begin{theorem}
 	\label{thm3-type2-degenerate}
 	Assume 	\eqref{concave}, \eqref{elliptic}, \eqref{assumption1-type2},  \eqref{de-RHS} and \eqref{continuity1} hold. Then
 	the Dirichlet problem
 	\eqref{equ1-real}-\eqref{boundary-data1} possesses  a weak solution  
 	$u\in C^{1,1}(\overline{M})$ with 
 	$\Delta u \in L^{\infty}(\overline{M})$ and  $\lambda(\nabla^2 u+\chi)\in \overline{\Gamma}$ in $\overline{M}$.
 \end{theorem}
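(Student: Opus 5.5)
We prove Theorem \ref{thm3-type2-degenerate} by approximating the degenerate equation with nondegenerate type 2 equations, solving these with Theorem \ref{thm1-type2}, and passing to the limit using estimates that do not depend on the approximation.

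\textbf{Approximating problems.} For $\epsilon\in(0,1]$ set $\psi^\epsilon=\psi+\epsilon$. Then $\inf_M\psi^\epsilon>\sup_{\partial\Gamma}f$, so \eqref{nondegeneracy1} holds for $\psi^\epsilon$. The pair is still of type 2. Theorem \ref{thm1-type2} therefore gives a unique smooth admissible solution $u^\epsilon$ of
\[
f(\lambda[\nabla^2u^\epsilon+\chi])=\psi^\epsilon \mbox{ in } \overline{M},\qquad u^\epsilon=\varphi \mbox{ on } \partial M.
\]
The rest of the proof consists of bounds on $\sup|u^\epsilon|$, $\sup|\nabla u^\epsilon|$ and $\sup|\nabla^2u^\epsilon|$ that are independent of $\epsilon$.

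\textbf{$C^0$ bound.} By concavity \eqref{concave}, ellipticity \eqref{elliptic} and the type 2 condition, Lemma \ref{lemma1-PUE} gives partial uniform ellipticity. Concretely, there is $\theta>0$ such that $f_i(\lambda)\geq\theta\sum_j f_j(\lambda)$ for all $i$ and all $\lambda\in\Gamma$.
\begin{itemize}
\item Upper bound: from $\lambda\in\Gamma$ we get $\Delta u^\epsilon+\mathrm{tr}_g\chi>0$. Comparing with the harmonic extension of $\varphi$ gives a bound from above.
\item Lower bound: a barrier $\underline{w}$ comes from the admissible function constructed in the proof of Theorem \ref{thm1-type2} via Morse theory. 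Take a function $h$ without critical points and set $\underline w=A(e^{Bh}-C)$ plus the harmonic extension of $\varphi$. For large $B$ and $A$, its eigenvalues are dominated by a large multiple of a rank-one direction. Since $(0,\dots,0,1)\in\mathring{\Gamma}_{\mathcal G}^f$, this forces $f(\lambda[\nabla^2\underline w+\chi])\geq\sup\psi+1$, uniformly in $\epsilon$. The comparison principle then gives $u^\epsilon\geq\underline w$.
\end{itemize}
The same barriers on either side, which agree with $\varphi$ on $\partial M$, bound the normal derivative at the boundary. The interior gradient bound follows from the usual maximum principle argument. Partial uniform ellipticity makes this easy: $\sum f_i$ controls every $f_i$, so the good term $\sum f_i|\nabla_i\nabla u|^2$ is genuinely coercive.

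\textbf{$C^2$ bound, the main obstacle.} We need a Hessian bound that does not degenerate as $\epsilon\to0$. We apply the maximum principle to $\log\lambda_{\max}(\nabla^2u+\chi)+\phi(|\nabla u|^2)+\eta(u)$ in the interior, and use tangential, mixed and double-normal estimates at the boundary.
\begin{itemize}
\item The estimates must not use a lower bound on $\inf\psi^\epsilon-\sup_{\partial\Gamma}f$. Partial uniform ellipticity is what makes this possible: terms like $\sum f_i|\chi_{ii}|$ and the curvature terms are absorbed by $\theta\sum f_j\,\lambda_{\max}$.
\item The double-normal derivative at the boundary follows directly once tangential and mixed derivatives are bounded. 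Since the tangential directions already control $\Delta u$ via $f_n\geq\theta\sum f_j$, no strict-subsolution or Trudinger-type argument is needed.
\item The delicate point is the mixed derivatives. Here we use the barrier $\Psi=A_1 v+A_2\rho^2-A_3\sum_{\beta}|\nabla_\beta(u-\varphi)|^2$, where $\rho$ is the distance to $\partial M$ and $v$ is again built from the Morse function. We check that $\mathcal L\Psi\leq -c\sum f_i$ with $c$ independent of $\epsilon$. This works because $\sum f_i\geq\kappa>0$ on level sets is not needed: every term scales with $\sum f_i$.
\end{itemize}
Granting these, $\sup|\nabla^2u^\epsilon|\leq C$ uniformly in $\epsilon$.

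\textbf{Passage to the limit.} By Arzel\`a--Ascoli, a subsequence $u^\epsilon\to u$ in $C^{1,\alpha}(\overline{M})$ with $u\in C^{1,1}(\overline{M})$. In particular $\Delta u\in L^\infty$, and $\lambda(\nabla^2u+\chi)\in\overline\Gamma$ almost everywhere, since $\overline\Gamma$ is closed. The limit equation $f(\lambda[\nabla^2u+\chi])=\psi$ holds in the weak, viscosity sense. This uses $f\in C(\overline\Gamma)$ from \eqref{continuity1} and the stability of viscosity solutions under uniform convergence. Finally $u=\varphi$ on $\partial M$, which completes the proof.
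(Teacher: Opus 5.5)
Your overall scheme is exactly the paper's: replace $\psi$ by $\psi+\epsilon$, solve with Theorem \ref{thm1-type2}, prove $C^2$ bounds independent of $\epsilon$, and extract a $C^{1,1}$ limit. The gaps are in two of the $\epsilon$-uniform estimates, which you assert but which fail as described.

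\textbf{First gap: the boundary gradient bound.} Your lower barrier $A(e^{Bh}-C)+(\text{extension of }\varphi)$ cannot coincide with $\varphi$ on $\partial M$. That would force $h$ to be constant on $\partial M$, and then $h$ would have an interior extremum, i.e.\ a critical point. So this barrier gives only the $C^0$ bound. The same defect affects the function $v$ in your mixed-derivative barrier, which must vanish on $\partial M$.

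The missing ingredient is a local barrier in a collar, $\underline{w}=2\log\frac{\delta_0^2}{\delta_0^2+\sigma}+\varphi$ as in \eqref{barrier1-w}. It has three properties:
\begin{itemize}
\item the type 2 condition together with Lemma \ref{lemma2-key} makes it a strict subsolution in $M_{\delta_0}$, since the term $\dd\sigma\otimes\dd\sigma/(\delta_0^2+\sigma)^2$ dominates;
\item the global $C^0$ bound puts it below $u^\epsilon$ on $\{\sigma=\delta_0\}$;
\item it equals $\varphi$ on $\partial M$.
\end{itemize}
This gives $|\nabla u^\epsilon|\le C$ on $\partial M$, and $u-\underline{w}$ is the correct building block for the mixed-derivative barrier.

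\textbf{Second gap: not every term scales with $\sum f_i$.} Differentiating the equation produces derivatives of $\psi$, for example $\nabla_\alpha\psi$ in $\mathcal{L}\nabla_\alpha(u-\varphi)$ and $\nabla_k\psi$ in the gradient estimate. These carry no factor $\sum f_i$. Moreover, $\sum f_i$ need not be bounded below on level sets of a type 2 pair.

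For example, take $n=2$, $\Gamma=\Gamma_1$, $0<\beta<1$, $h(s)=\log(1+e^{-s})$, and $f(\lambda)=-\big(h(\lambda_1+\beta\lambda_2)^2+h(\lambda_2+\beta\lambda_1)^2\big)/(\lambda_1+\lambda_2)$. This $f$ is concave, elliptic and of type 2. Yet $\sum f_i\to0$ along $\{f=\sigma\}$ for every $\sigma<0$: take $\lambda=(L,-\beta L-y)$ with $y\sim\sqrt{L}$.

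Hence $\mathcal{L}\Psi\le -c\sum f_i$ does not close the argument. You need $\mathcal{L}\Psi\le -c(1+\sum f_i)$ with $c$ independent of $\epsilon$, and the same fix is needed in your gradient and interior estimates. The paper gets this from Lemma \ref{lemma5-key} applied to $\underline{w}$, namely the dichotomy \eqref{inequ-case1-0}--\eqref{inequ-case2-0}, with $\varepsilon$ independent of $\delta_{\psi,f}$. In the second alternative one has $\sum F^{ii}\ge n\varepsilon$. In the present fully uniformly elliptic setting you could instead use concavity against a fixed $\mu\in\Gamma$ with $f(\mu)>\sup_M\psi$. For small $\epsilon$ this gives $\mathcal{L}u\le C\sum f_i-c_0$, which supplies the missing constant.

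\textbf{Two smaller points.}
\begin{itemize}
\item In the limit, $\nabla^2u^\epsilon$ converges only weak-$*$. So $\lambda(\nabla^2u+\chi)\in\overline{\Gamma}$ a.e.\ uses convexity of $\{A:\lambda(A)\in\overline{\Gamma}\}$, not just closedness.
\item The upper barrier should solve $\Delta h+\mathrm{tr}_g\chi=0$ rather than be harmonic.
\end{itemize}
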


 \begin{remark}
 	
 	Theorems \ref{thm1-type2} and \ref{thm3-type2-degenerate} allow
 	\eqref{equ2-n-1} when $\mathring{ {\Gamma}}_{\mathcal{G}}^{ {f}}\neq\Gamma_n$, 
 	in which case   
 	$\varrho_{ {\Gamma}_{\mathcal{G}}^{ {f}}}>1$.
 \end{remark}

 One of the key ingredients in deriving \eqref{estimate1-Hessian-real} is to derive  gradient estimate  
 \begin{equation}
 	\label{gradient-estimate1}
 	\begin{aligned} 
 		\sup_M |\nabla u|\leq C.
 	\end{aligned}
 \end{equation}
 For the equation \eqref{equ1-real} with $\chi=g$, the gradient estimate  has been studied by Li \cite{LiYY1990} when the sectional curvature of $g$ is nonnegative, and  by Urbas \cite{Urbas2002} when replacing such a curvature condition by certain extra assumptions on $f$ including 
 \begin{equation}
 	\label{key2-yuan}
 	\begin{aligned}
 		f_i(\lambda)\geq \delta \sum  f_j(\lambda) \mbox{ if } \lambda_i\leq 0.
 	\end{aligned}
 \end{equation}
 See also \cite{Guan12a}. 
 In 2020,  
 the author \cite{yuan2020conformal}
 confirmed such a key inequality
 when $f$ satisfies \eqref{addistruc},
 thereby  
 extending  \eqref{estimate1-Hessian-real} to a large number
 of
 real Hessian equations. 
 A straightforward proof of the gradient estimate for \eqref{equ1-real}  was carried out by Guan \cite{Guan-Dirichlet} under  a rather weak assumption.
 The above-mentioned  
 gradient estimate is based on Bernstein method. 
 Another effective method is the blow-up argument. 
 Such an argument  was 
 developed by Chen \cite{Chen} 
 for Dirichlet problem of complex Monge-Amp\`ere equation,  and by  Dinew-Ko{\l}odziej \cite{Dinew2017Kolo},  building on Hou-Ma-Wu's  \cite{HouMaWu2010} second order
  estimate od the form
  \begin{equation}
 	\label{hmw-1}
 	\begin{aligned}
 		\sup_{M}|\Delta u|\leq C(1+\sup_M|\nabla u|^2),
 	\end{aligned}
 \end{equation}
 in the case of  complex $k$-Hessian  equations on closed  (compact without boundary) K\"ahler manifolds.   
 Recently, under the extra assumption 
 \eqref{addistruc}, the blow-up argument was extended extensively by  Sz\'ekelyhidi
 \cite{Gabor}   to Hessian equations on closed Hermitian manifolds. 
  Such a technical assumption  
  was subsequently removed by 
 the author \cite{yuan1-closed}, thereby 
 extending Sz\'ekelyhidi's $C^{2,\alpha}$-estimate to the optimal setting. 
Moreover,  in the unbounded case \eqref{unbounded-1}, 
 the author \cite{yuan1-closed} also established  a prior 
  estimate   \eqref{hmw-1}   
 and then solved the Dirichlet problem 
 of Hessian equations on complex manifolds.  
 In this paper, we employ their strategy to derive gradient estimate 
 in the type 1 case.

 The rest of this paper is organized as follows.
 In Section \ref{sec2-approximate-pairs} we  discuss the structure of a pair and then propose an approximation of  the pair $(f,\Gamma)$ by type 2 pairs. 
 In Section \ref{sec3-proof-of-type2-existence}, we prove Theorems \ref{thm1-type2}
 and \ref{thm3-type2-degenerate}
 based on Morse theory and the characterization of type 2 pairs. 
 In Section \ref{sec4} we establish 
 \eqref{hmw-1} and \eqref{estimate1-Hessian-real}. 
 In Section \ref{sec2-contunuitymethod-Dirichlet} we propose a continuity method for  Dirichlet problem  and  employ it to give a proof of Theorem   \ref{thm1-exietnce-Dirichlet-real}. 
 In Appendix \ref{appendix1} we summarize some useful lemmas.

 \section{Approximate Hessian equations using type 2 equations}
 \label{sec2-approximate-pairs}
 
 
 \subsection{Structure of a type 2 pair} 
 
 We  review a property of $ \Gamma_{\mathcal{G}}^{f}$ proposed in \cite{yuan1-closed}.
 
 \begin{lemma}[\cite{yuan1-closed}]
 	\label{lemma2-key}
 	Suppose that $f$ satisfies  \eqref{concave}   in $\Gamma$. Then  
 	\begin{equation}
 		\label{inequality22} 
 		\begin{aligned}
 			\sum_{i=1}^n f_i(\lambda)\mu_i \geq  0  \textrm{ and } 
 			f(\lambda+\mu)\geq  f(\lambda), 
 			\,\, \forall \lambda\in\Gamma, \, 
 			\forall	\mu\in  \Gamma_{\mathcal{G}}^{f}.
 		\end{aligned}
 	\end{equation} 
 	
 \end{lemma}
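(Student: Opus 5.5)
The plan is to deduce both inequalities from concavity, using the fact that along a ray in the direction $\mu\in\Gamma_{\mathcal{G}}^{f}$ the function stays bounded below. Fix $\lambda\in\Gamma$ and $\mu\in\Gamma_{\mathcal{G}}^{f}$.

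\emph{Step 1: the derivative inequality.} Since $\Gamma$ is a convex cone, $\lambda+t\mu\in\Gamma$ for every $t\geq 0$. Concavity of $f$ on $\Gamma$ gives the tangent-plane bound
\begin{equation*}
f(\lambda+t\mu)\leq f(\lambda)+t\sum_{i=1}^n f_i(\lambda)\mu_i .
\end{equation*}
To bring in the defining property of $\Gamma_{\mathcal{G}}^{f}$, I will compare $\lambda+t\mu$ with $t\mu$. Write
\begin{equation*}
\lambda+t\mu=\tfrac12(2\lambda)+\tfrac12(2t\mu).
\end{equation*}
Concavity then gives
\begin{equation*}
f(\lambda+t\mu)\geq \tfrac12 f(2\lambda)+\tfrac12 f(2t\mu).
\end{equation*}
The limit $\lim_{s\to+\infty}f(s\mu)$ exists in $[-\infty,+\infty]$, because a concave function of one variable is eventually monotone. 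By the definition of $\Gamma_{\mathcal{G}}^{f}$ this limit is $>-\infty$. Hence $f(2t\mu)\geq -C$ for all large $t$, and so $f(\lambda+t\mu)$ is bounded below as $t\to+\infty$. Combining with the tangent-plane bound,
\begin{equation*}
t\sum_i f_i(\lambda)\mu_i\geq -C'
\end{equation*}
for all large $t$. Dividing by $t$ and letting $t\to\infty$ yields $\sum_i f_i(\lambda)\mu_i\geq 0$.

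\emph{Step 2: monotonicity.} Apply Step 1 at the points $\lambda+s\mu\in\Gamma$ for $s\in[0,1]$. This gives $\frac{d}{ds}f(\lambda+s\mu)=\sum_i f_i(\lambda+s\mu)\mu_i\geq 0$. Integrating from $0$ to $1$ gives $f(\lambda+\mu)\geq f(\lambda)$.

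\emph{Main obstacle.} The only delicate point is justifying that $f(t\mu)$ is bounded below for large $t$. This is exactly where the definition of $\Gamma_{\mathcal{G}}^{f}$ enters, together with the monotone behaviour of concave functions along rays, which ensures that the limit exists and that the lower bound holds uniformly for large $t$. It also requires $\mu\in\Gamma$; this holds because $\Gamma_{\mathcal{G}}^{f}\subset\Gamma$ by definition.
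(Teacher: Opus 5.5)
Your proof is correct and follows essentially the paper's route: the concavity tangent-plane inequality, combined with the lower bound on $f(t\mu)$ that comes from $\mu\in\Gamma_{\mathcal{G}}^{f}$, gives $\sum_i f_i(\lambda)\mu_i\ge 0$, and applying this at shifted points then gives monotonicity. The paper is slightly more direct in both steps. For the first inequality it evaluates the tangent-plane inequality at $t\mu$ itself, obtaining $\sum_i f_i(\lambda)\mu_i\ge\limsup_{t\to\infty}f(t\mu)/t$ without your midpoint-concavity detour. For the second it uses concavity once at $\lambda+\mu$, namely $f(\lambda+\mu)-f(\lambda)\ge\sum_i f_i(\lambda+\mu)\mu_i\ge 0$, instead of integrating along the segment.
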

 \begin{proof} 
 	We start with an observation  
 	\begin{equation}
 		\label{010} 
 		\begin{aligned}
 			\sum_{i=1}^n f_i(\lambda)\mu_i \geq \limsup_{t\rightarrow+\infty} f(t\mu)/t, 
 			\,\, \forall \lambda, \, \mu\in\Gamma.  
 	\end{aligned}\end{equation} 
 	This inequality follows from  the concavity of $f$. 
 	Moreover, by \eqref{concave}  we get
 	\begin{equation}
 		\label{concave-1}
 		\begin{aligned}
 			\sum_{i=1}^n f_i(\lambda)(\mu_i-\lambda_i)\geq f(\mu)-f(\lambda), \,\, \forall \lambda, \,
 			\mu \in \Gamma.
 		\end{aligned}
 	\end{equation}
 	By \eqref{010}, 
 	$\sum  f_i(\lambda)\mu_i \geq  0$, 
 	$\forall\lambda\in\Gamma$, $\forall\mu\in 	\Gamma_{\mathcal{G}}^{f}$.  
 	Combining \eqref{concave-1}, we get
 	$f(\lambda+\mu)\geq  f(\lambda)$.

 \end{proof}
 
 As in introduction  $\vec{\bf1}=(1,\cdots,1).$
 According to  \cite[Theorem 5.6]{yuan-PUE4},
 we get
 \begin{lemma}
  	[\cite{yuan-PUE4}]
 	\label{lemma1-PUE}
 	In addition to  \eqref{concave},
 	we assume $(-\delta_0,0,\cdots,0,1)\in \mathring{\Gamma}_{\mathcal{G}}^f$ for some $\delta_0>0$. Then 
 	\begin{equation}
 		\label{pue-3} 
 		\begin{aligned}
 			f_i(\lambda)\geq    \frac{\delta_0}{n}\sum_{j=1}^n f_j(\lambda) \textrm{ in } \Gamma, \,\,  \forall 1\leq i\leq n.  
 		\end{aligned}
 	\end{equation}
 	Moreover,  
 	$\underset{t\rightarrow+\infty} \lim f(\lambda_1,\cdots,\lambda_{n-1},\lambda_n+t)=\sup_\Gamma f, \,\, \forall \lambda=(\lambda_1,\cdots, \lambda_n)\in\Gamma.$   
 \end{lemma}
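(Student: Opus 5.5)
Both assertions will come from Lemma \ref{lemma2-key} combined with the fact that $\mathring{\Gamma}_{\mathcal{G}}^f$ is an open symmetric convex cone.

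\emph{Convexity and symmetry of $\Gamma_{\mathcal{G}}^f$.} For $\lambda,\mu\in\Gamma_{\mathcal{G}}^f$, concavity gives
$$f\bigl(t(\lambda+\mu)/2\bigr)\geq \tfrac12 f(t\lambda)+\tfrac12 f(t\mu).$$
Letting $t\to+\infty$ shows $(\lambda+\mu)/2\in \Gamma_{\mathcal{G}}^f$. The cone property is clear, and symmetry follows from the symmetry of $f$. Hence $\Gamma_{\mathcal{G}}^f$ is a symmetric convex cone, and so is its interior.

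\emph{The inequality \eqref{pue-3}.} Fix $i$. By symmetry, the permuted vector $\mu^{(i)}$, which has $-\delta_0$ in slot $i$, $1$ in some other slot $j\neq i$, and $0$ elsewhere, lies in $\mathring{\Gamma}_{\mathcal{G}}^f$. Summing over all $j\neq i$ (convex cone) gives
$$\mu:=\sum_{j\neq i}\bigl(e_j-\delta_0 e_i\bigr)\in \Gamma_{\mathcal{G}}^f.$$
The $i$-th entry of $\mu$ is $-(n-1)\delta_0$ and all other entries are $1$. Lemma \ref{lemma2-key} then yields
$$\sum_{j\neq i}f_j(\lambda)\geq (n-1)\delta_0 f_i(\lambda),$$
that is, $\sum_j f_j\geq (1+(n-1)\delta_0)f_i$. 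This is the wrong direction, so we need vectors with a \emph{positive} $i$-th entry and negative entries elsewhere.

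Instead use $\nu^{(j)}$, which has $1$ in slot $i$, $-\delta_0$ in slot $j\neq i$, and $0$ elsewhere; it lies in $\mathring{\Gamma}_{\mathcal{G}}^f$ by symmetry. Lemma \ref{lemma2-key} gives $f_i\geq \delta_0 f_j$ for every $j\neq i$. Averaging over $j$, and noting $f_i\geq \delta_0 f_i$ (we may assume $\delta_0\le 1$, since $(-\delta_0,0,\dots,0,1)\in\Gamma_{\mathcal{G}}^f$ forces this after averaging with its transposition), we get
$$n f_i\geq \delta_0\sum_j f_j,$$
which is \eqref{pue-3}. To justify $\delta_0\leq 1$: the average of $(-\delta_0,1,0,\dots)$ and $(1,-\delta_0,0,\dots)$ lies in $\Gamma_{\mathcal{G}}^f$. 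If $\delta_0>1$, then $-(e_1+e_2)$ would lie in the cone. Adding suitable multiples of $e_1+e_2$ from $\Gamma_n$ would then force the cone to contain a line, so $\lambda$ and $-\lambda$ both lie in it, contradicting $\partial\Gamma\neq\emptyset$ together with $\Gamma_{\mathcal{G}}^f\subseteq\Gamma$. Alternatively, one can simply shrink $\delta_0$ to $\min(\delta_0,1)$, since openness and convexity with $\Gamma_n$ preserve membership. Shrinking is the cleaner route.

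\emph{The limit statement.} Since $e_n\in\mathring{\Gamma}_{\mathcal{G}}^f$, the function $t\mapsto f(\lambda+te_n)$ is nondecreasing by Lemma \ref{lemma2-key}, so the limit exists and is at most $\sup_\Gamma f$. For the reverse inequality, take any $\mu\in\Gamma$. The key step is to show $\lambda+te_n-\mu\in\Gamma_{\mathcal{G}}^f$ for large $t$. This holds because
$$\frac{\lambda-\mu}{t}+e_n\to e_n\in\mathring{\Gamma}_{\mathcal{G}}^f,$$
so the vector lies in the open cone for $t$ large. Then Lemma \ref{lemma2-key}, applied at the point $\mu$ with increment $\lambda+te_n-\mu$, gives $f(\lambda+te_n)\geq f(\mu)$. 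Taking the supremum over $\mu\in\Gamma$ gives $\lim_{t\to+\infty} f(\lambda+te_n)\geq\sup_\Gamma f$.

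This openness argument is the main point of the proof; the rest is bookkeeping with the cone structure.
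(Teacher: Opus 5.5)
Your proof is correct and is essentially the paper's. Lemma \ref{lemma2-key} applied to a permutation of $(-\delta_0,0,\cdots,0,1)$ gives $f_i\geq\delta_0 f_j$, and openness of $\mathring{\Gamma}_{\mathcal{G}}^f$ around $(0,\cdots,0,1)$ combined with Lemma \ref{lemma2-key} gives the limit; the paper compares only with $f(R_0\vec{\bf1})$, while your comparison with an arbitrary $f(\mu)$ reaches $\sup_\Gamma f$ more directly.

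Your detour on $\delta_0\leq 1$ is unnecessary: the paper compares the smallest derivative with the largest, $f_n\geq\delta_0 f_1\geq\frac{\delta_0}{n}\sum_j f_j$, which needs no bound on $\delta_0$. Shrinking $\delta_0$ by itself would only give the constant $\min(\delta_0,1)/n$, although $\delta_0<1$ holds automatically since $\mathring{\Gamma}_{\mathcal{G}}^f\subseteq\Gamma\subseteq\Gamma_1$.
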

 
 \begin{remark} 
 	When $f$ further satisfies \eqref{addistruc}, 
 	the result was  obtained  in \cite{yuan2020conformal}.
 \end{remark}

 \begin{proof}
 	
 	Suppose $\lambda_1 \leq \cdots\leq \lambda_n$. The concavity and symmetry of $f$ implies 
 	$f_n\leq \cdots \leq f_1$ and  $f_1\geq \frac{1}{n}\sum  f_j$.   
 	By Lemma \ref{lemma2-key}, 
 	$f_n\geq \delta_0 f_1\geq \frac{\delta_0}{n}\sum f_j$.
 	Finally,   for  $R_0>0$  from   $(0,\cdots,0,1)\in \mathring{\Gamma}_{\mathcal{G}}^f$, 
 	one can find  $t\gg1$ so that 	$t(0,\cdots,0,1)+\lambda-R_0\vec{\bf1}\in \mathring{\Gamma}_{\mathcal{G}}^f$. Thus 
 	$	f(\lambda_1,\cdots,\lambda_{n-1},\lambda_n+t)
 	\geq f(R_0\vec{\bf1})$ 
 	according to \eqref{inequality22} from Lemma \ref{lemma2-key}.

 \end{proof}

 \subsection{Structure of a pair and an application to approximation}
 \label{subsect-2.2}
 The results of this subsection are based on an observation.
 The observation is noted in \cite{yuan-PUE4} and closely aligns with the results presented in \cite[Section 4]{yuan-PUE3}.
 Fix a   pair  $({f},{\Gamma})$.
 Let $\varrho_{{\Gamma}_{\mathcal{G}}^f}$ be as in \eqref{def2-varrho-Gamma}, 
 and pick a positive constant $\varrho$ with
 $n-\varrho_{{\Gamma}_{\mathcal{G}}^f} < \varrho <n$.
 Define
 $\mu^{\lambda}	:= \sum_{j=1}^n \lambda_j \vec{\bf 1}-\varrho\lambda, \, \forall \lambda\in\mathbb{R}^n.$   
 Let $P$ be a linear map   from $\mathbb{R}^n$  to $\mathbb{R}^n$ 
 with $P(\lambda)= \mu^\lambda.$ 
 Since  $ 0<\varrho <n$,  the linear map   is well-defined and invertible, and $P^{-1}(\mu)=\frac{1}{\varrho(n-\varrho)}(\sum_{j=1}^n \mu_j \vec{\bf1}-(n-\varrho)\mu)$.  
 Take $\tilde{\Gamma}:=P({\Gamma})$ and define  $ \tilde{f}: \tilde{\Gamma} \to \mathbb{R}$ by
 \begin{equation}
 	\label{def1-f}\begin{aligned}  
 		\tilde{f}(\mu^\lambda)= {f}(\lambda). 
 \end{aligned}\end{equation} 
 So    $\tilde{\Gamma}$ is an open, convex, symmetric cone with vertex at origin.
 We prove the following lemma,   which is noted in  \cite{yuan-PUE4}.
 \begin{lemma}
 	\label{proposition2-Gamma}
 Given a pair $({f},{\Gamma})$ obeying \eqref{concave} and \eqref{elliptic}. Let $\varrho$, $\tilde{\Gamma}$ and $\tilde{f}$  be as above. 
 	Then   $\tilde{\Gamma}$ is an open symmetric   convex cone with vertex at the origin, and 
 	$$ 
 	\Gamma_n\subseteq \mathring{\tilde{\Gamma}}_{\mathcal{G}}^{\tilde{f}}\subseteq \tilde{\Gamma}\subseteq \Gamma_1, \,\, \varrho\leq \varrho_{\tilde{\Gamma}_{\mathcal{G}}^{\tilde{f}}},$$
 	and $\tilde{f}$ satisfies \eqref{concave} and  \eqref{elliptic} in $\tilde{\Gamma}$.
 	Moreover, $(\tilde{f},\tilde{\Gamma})$ is of type 2, 
 	and 
 	\begin{enumerate}
 		\item If $(0,\cdots,0,1)\in\mathring{{\Gamma}}_{\mathcal{G}}^{{f}}$, then $\varrho<\varrho_{\tilde{\Gamma}_{\mathcal{G}}^{\tilde{f}}}.$
 		
 		\item If $(0,\cdots,0,1)\in\partial{{\Gamma}}_{\mathcal{G}}^{{f}}$, then $\varrho=\varrho_{\tilde{\Gamma}_{\mathcal{G}}^{\tilde{f}}}.$
 	\end{enumerate}
 	
 \end{lemma}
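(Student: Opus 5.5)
The plan is to exploit that $P$ is linear, invertible and commutes with coordinate permutations, so every structural property of $(f,\Gamma)$ transfers to $(\tilde f,\tilde\Gamma)$. The one identity everything rests on is
\[
\tilde\Gamma_{\mathcal G}^{\tilde f}=P(\Gamma_{\mathcal G}^{f}),
\]
which holds because $\tilde f(t\mu^\lambda)=f(t\lambda)$. Since $P$ is a linear homeomorphism, it follows that $\mathring{\tilde\Gamma}_{\mathcal G}^{\tilde f}=P(\mathring{\Gamma}_{\mathcal G}^{f})$ and $\partial\tilde\Gamma_{\mathcal G}^{\tilde f}=P(\partial\Gamma_{\mathcal G}^{f})$.

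\textbf{Elementary properties.} Openness, convexity and the cone property of $\tilde\Gamma$ are immediate from linearity and invertibility of $P$. Symmetry holds because $P(\sigma\lambda)=\sigma P(\lambda)$ for every permutation $\sigma$. Concavity of $\tilde f$ follows because it is the composition of $f$ with the linear map $P^{-1}$. For $\tilde\Gamma\subseteq\Gamma_1$, I use the standard fact that a proper open symmetric convex cone containing $\Gamma_n$ lies in $\Gamma_1$. Then $\sum_j\mu^\lambda_j=(n-\varrho)\sum_j\lambda_j>0$.

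\textbf{The key inclusion $\Gamma_n\subseteq\mathring{\tilde\Gamma}_{\mathcal G}^{\tilde f}$.} I first note that $\Gamma_n\subseteq\Gamma_{\mathcal G}^f$, since $t\mapsto f(t\lambda)$ is nondecreasing for $\lambda\in\Gamma_n$ by \eqref{elliptic}. Combined with convexity and the definition \eqref{def2-varrho-Gamma}, this gives $(1,\cdots,1,1-s)\in\mathring\Gamma_{\mathcal G}^f$ for every $s<\varrho_{\Gamma_{\mathcal G}^f}$. This is the step needing the most care: I would argue it as the convex combination of a boundary point and an interior point (the point $\vec{\bf 1}$ lies in the interior) along the ray. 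Next, compute
\[
P^{-1}(e_i)=\frac{1}{\varrho(n-\varrho)}\bigl(\vec{\bf 1}-(n-\varrho)e_i\bigr).
\]
Because $n-\varrho<\varrho_{\Gamma_{\mathcal G}^f}$, this lies in $\mathring\Gamma_{\mathcal G}^f$ after symmetrization. Hence $e_i\in\mathring{\tilde\Gamma}_{\mathcal G}^{\tilde f}$ for every $i$. By convexity, all of $\overline{\Gamma}_n\setminus\{0\}$ lies in $\mathring{\tilde\Gamma}_{\mathcal G}^{\tilde f}$. In particular $(0,\cdots,0,1)$ is an interior point, so $(\tilde f,\tilde\Gamma)$ is of type 2.

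\textbf{Ellipticity of $\tilde f$.} Since $e_i$ is interior, there is $\delta_0>0$ with $e_i-\delta_0 e_1\in\tilde\Gamma_{\mathcal G}^{\tilde f}$, after permuting coordinates. Lemma \ref{lemma1-PUE} then gives $\tilde f_i\geq\frac{\delta_0}{n}\sum_j\tilde f_j$. Alternatively, I can apply Lemma \ref{lemma2-key} directly to $e_i-\epsilon\vec{\bf 1}$. Strict positivity then reduces to showing $\sum_j\tilde f_j>0$. For this, differentiate $f(\lambda)=\tilde f(P\lambda)$, which gives
\[
f_i=\sum_j\tilde f_j-\varrho\tilde f_i .
\]
Summing over $i$ yields $\sum_i f_i=(n-\varrho)\sum_j\tilde f_j$. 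The left side is positive by \eqref{elliptic}, so $\sum_j\tilde f_j>0$.

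\textbf{The bound on $\varrho_{\tilde\Gamma_{\mathcal G}^{\tilde f}}$ and the two cases.} We have $P(e_n)=(1,\cdots,1,1-\varrho)$, and $e_n\in\overline{\Gamma}_n\subseteq\overline{\Gamma_{\mathcal G}^f}$. Hence $(1,\cdots,1,1-\varrho)\in\overline{\tilde\Gamma_{\mathcal G}^{\tilde f}}$. The ray $s\mapsto(1,\cdots,1,1-s)$ meets the closure exactly for $s\le\varrho_{\tilde\Gamma_{\mathcal G}^{\tilde f}}$, so $\varrho\le\varrho_{\tilde\Gamma_{\mathcal G}^{\tilde f}}$.
\begin{enumerate}
\item If $e_n\in\mathring\Gamma_{\mathcal G}^f$, then $P(e_n)$ is an interior point of $\tilde\Gamma_{\mathcal G}^{\tilde f}$, and the inequality is strict.
\item If $e_n\in\partial\Gamma_{\mathcal G}^f$, then $P(e_n)\in\partial\tilde\Gamma_{\mathcal G}^{\tilde f}$. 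The ray crosses the boundary only once, by convexity and since $\vec{\bf 1}$ is interior, so equality holds.
\end{enumerate}
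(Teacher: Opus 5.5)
Your proposal is correct and follows the paper's skeleton. Both arguments rest on three facts:
\begin{enumerate}
\item the identity $\tilde\Gamma_{\mathcal G}^{\tilde f}=P(\Gamma_{\mathcal G}^f)$;
\item $P$ maps $(1,\cdots,1,1-n+\varrho)\in\mathring\Gamma_{\mathcal G}^f$ to a positive multiple of $(0,\cdots,0,1)$, which is your computation of $P^{-1}(e_i)$;
\item $P(e_n)=(1,\cdots,1,1-\varrho)$, which gives $\varrho\le\varrho_{\tilde\Gamma_{\mathcal G}^{\tilde f}}$.
\end{enumerate}
You also spell out the convex-combination step and items (1)--(2), which the paper leaves to the reader.

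The genuinely different step is ellipticity. The paper uses the chain rule
\[
\frac{\partial\tilde f}{\partial\mu_i}=\frac{1}{\varrho(n-\varrho)}\sum_{j} f_j-\frac{1}{\varrho}f_i>\frac{1}{\varrho\,\varrho_{\Gamma_{\mathcal G}^f}}\Bigl(\sum_j f_j-\varrho_{\Gamma_{\mathcal G}^f}f_i\Bigr)\geq 0 .
\]
That is, it applies Lemma \ref{lemma2-key} to $f$ itself, tested against the permuted boundary point $(1,\cdots,1,1-\varrho_{\Gamma_{\mathcal G}^f})$. This needs a closure argument, since that point need not belong to $\Gamma_{\mathcal G}^f$. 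Strictness comes from $n-\varrho<\varrho_{\Gamma_{\mathcal G}^f}$ together with $\sum_j f_j>0$.

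You instead prove the type 2 property of $(\tilde f,\tilde\Gamma)$ first. You then apply Lemma \ref{lemma1-PUE} (or Lemma \ref{lemma2-key} at the interior point $e_i-\epsilon\vec{\bf 1}$) to $\tilde f$ to get $\tilde f_i\geq\epsilon\sum_j\tilde f_j$. Finally you get $\sum_j\tilde f_j>0$ from $\sum_i f_i=(n-\varrho)\sum_j\tilde f_j$. This is not circular, because type 2 was obtained without using ellipticity of $\tilde f$.

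Your version works only with interior points and directly delivers the uniform ellipticity \eqref{pue-3} of $\tilde f$, which is what the paper exploits later. The paper's version is a one-line computation that does not depend on having shown type 2 first, and it expresses $\tilde f_i$ explicitly through the $f_j$.

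Both arguments tacitly use that $\Gamma_{\mathcal G}^f$ is a convex cone. This follows from concavity of $f$ via
\[
f\bigl(t(\theta\lambda+(1-\theta)\lambda')\bigr)\geq\theta f(t\lambda)+(1-\theta)f(t\lambda').
\]
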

 
 \begin{proof}
 	From $\sum_{j=1}^n \mu_j^{\lambda}=(n-\varrho) \sum_{j=1}^n\lambda_j$, we see $\tilde{\Gamma}\subseteq\Gamma_1.$   
 	By $ n-\varrho_{{\Gamma}_{\mathcal{G}}^f}< \varrho$, we know $(1,\cdots,1,1-n+\varrho)\in  \mathring{\Gamma}_{\mathcal{G}}^f,$ thereby $(0,\cdots,0,1)\in \mathring{\tilde{\Gamma}}_{\mathcal{G}}^{\tilde{f}}$. 
 	By	${\tilde{\Gamma}}_{\mathcal{G}}^{\tilde{f}}=P({\Gamma}_{\mathcal{G}}^f)$ and  $(0,\cdots,0,1)\in\overline{{\Gamma}}_{\mathcal{G}}^{{f}}$, we get $(1,\cdots,1,1-\varrho)\in\overline{\tilde{\Gamma}}_{\mathcal{G}}^{\tilde{f}}$. Thus $\varrho\leq \varrho_{\tilde{\Gamma}_{\mathcal{G}}^{\tilde{f}}}.$ 
 	On the other hand,    
 	we can  check the concavity of $\tilde{f} $ in $\tilde{\Gamma}$,  thanks to 
 	\begin{gather*} 	 
  \tilde{f}(t\mu^{\lambda}+(1-t)\mu^{\lambda'})
 		= f(t\lambda+(1-t)\lambda')   
 			\geq   tf(\lambda)+(1-t)f(\lambda')
 			\\
 		=t\tilde{f}(\mu^{\lambda})
 		+ (1-t)\tilde{f}(\mu^{\lambda'}),
 	 \,\, \forall 0<t<1, \, \forall \mu^{\lambda},\,\mu^{\lambda'}\in \tilde{\Gamma}.
 	\end{gather*} 
 	Finally, we see $(\tilde{f},\tilde{\Gamma})$ satisfies \eqref{elliptic} thanks to 
 	\begin{align*}
 		\frac{\partial \tilde{f}}{\partial \mu_i^\lambda} 
 		= 
 		\frac{1}{\varrho(n-\varrho)} \sum_{j=1}^n \frac{\partial f}{\partial\lambda_j}-\frac{1}{\varrho} \frac{\partial f}{\partial\lambda_i}  
 		> 
 		\frac{1}{\varrho \varrho_{\Gamma_{\mathcal{G}}^f}} \big(\sum_{j=1}^n \frac{\partial f}{\partial\lambda_j}- \varrho_{\Gamma_{\mathcal{G}}^f} \frac{\partial f}{\partial\lambda_i}\big)\geq0.
 	\end{align*}
 	The  above two inequalities follow  from $0< n-  \varrho<\varrho_{{\Gamma}_{\mathcal{G}}^f}$ and 
 	Lemma \ref{lemma2-key}.
 	
 \end{proof}

 \begin{theorem}
 	\label{thm1-approx}
 	Given a type 1 pair $(f,\Gamma)$ with $(0,\cdots,0,1)\in \partial{\Gamma}_{\mathcal{G}}^f$, let   $(\tilde{f}, \tilde{\Gamma})$ be the pair as  asserted in Lemma \ref{proposition2-Gamma}. 
 	For $t\geq 1$, denote
 	\begin{equation}
 		\label{pair1-t}
 		\begin{aligned}
 			{f}^t(\lambda)=\tilde{f}
 			\big(t\sum_{i=1}^n \lambda_i\vec{\bf1}-\varrho_{\tilde{\Gamma}_{\mathcal{G}}^{\tilde{f}}} \lambda\big), \,\,
 			{\Gamma}^t=
 			\Big\{\lambda\in\mathbb{R}^n: t\sum_{i=1}^n \lambda_i\vec{\bf1}-\varrho_{\tilde{\Gamma}_{\mathcal{G}}^{\tilde{f}}} \lambda \in\tilde{\Gamma} \Big\}.
 		\end{aligned}
 	\end{equation}
 	Then we have 
 	\begin{enumerate}
 		\item  $\Gamma\subseteq\Gamma^t$ and $\Gamma_{\mathcal{G}}^f\subseteq {\Gamma^t}_{\mathcal{G}}^{f^t}$ for $t>1$.  When $t=1$, $({f}^t,{\Gamma}^t)$ coincides with $({f},\Gamma)$.

 		\item  For each $t>1$, $(f^t,\Gamma^t)$ is a type 2 pair, i.e., $(0,\cdots,0,1)\in \mathring{\Gamma^t}_{\mathcal{G}}^{f^t}$.
 		
 		\item ${f}^t(\lambda)\geq {f}(\lambda), \, \forall \lambda\in {\Gamma}$, $\forall t\geq 1$.
 		
 	\end{enumerate}
 	
 \end{theorem}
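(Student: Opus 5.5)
The plan is to reduce everything to the linear map
$$P_t(\lambda):=t\sum_{i=1}^n\lambda_i\vec{\bf1}-\tilde\varrho\,\lambda,\qquad \tilde\varrho:=\varrho_{\tilde{\Gamma}_{\mathcal{G}}^{\tilde{f}}},$$
so that $f^t=\tilde f\circ P_t$ and $\Gamma^t=P_t^{-1}(\tilde\Gamma)$. Since $(0,\cdots,0,1)\in\partial\Gamma_{\mathcal{G}}^f$, case (2) of Lemma \ref{proposition2-Gamma} gives $\tilde\varrho=\varrho$. Hence $P_1=P$, and therefore $f^1=\tilde f\circ P=f$ and $\Gamma^1=P^{-1}(\tilde\Gamma)=\Gamma$. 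This settles the case $t=1$ of item (1).

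For $t\geq 1$, observe that $P_t=tJ-\varrho I$, where $J$ is the all-ones matrix. Its eigenvalues are $-\varrho$ and $nt-\varrho$. Since $0<\varrho<n$, both are nonzero, so $P_t$ is invertible. Consequently $\Gamma^t$ is an open symmetric convex cone, and $(f^t,\Gamma^t)$ inherits \eqref{concave} from $\tilde f$.

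The key decomposition is
$$P_t(\lambda)=P(\lambda)+(t-1)\Big(\sum_i\lambda_i\Big)\vec{\bf1}.$$
Take $\lambda\in\Gamma$. Then $\sum_i\lambda_i>0$, since $\Gamma\subseteq\Gamma_1$ for any proper open symmetric convex cone. The second summand therefore lies in $\overline{\Gamma}_n$. By Lemma \ref{proposition2-Gamma} we have $\Gamma_n\subseteq\tilde\Gamma_{\mathcal G}^{\tilde f}$, and $\tilde\Gamma+\overline\Gamma_n\subseteq\tilde\Gamma$ because $\tilde\Gamma$ is a convex cone containing $\Gamma_n$. Hence $P_t(\lambda)\in\tilde\Gamma$, which gives $\Gamma\subseteq\Gamma^t$.

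Applying Lemma \ref{lemma2-key} to the concave function $\tilde f$ (the case $t=1$ being trivial), we get
$$f^t(\lambda)=\tilde f\big(P\lambda+(t-1)\textstyle\sum_i\lambda_i\vec{\bf1}\big)\geq\tilde f(P\lambda)=f(\lambda).$$
This is item (3). The same inequality applied to $s\lambda$ gives $f^t(s\lambda)\geq f(s\lambda)$ for all $s>0$. So if $\lim_{s\to\infty}f(s\lambda)>-\infty$, then the same holds for $f^t$, i.e. $\Gamma_{\mathcal G}^f\subseteq{\Gamma^t}_{\mathcal G}^{f^t}$. This completes item (1).

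For item (2), I first note that $f^t(s\lambda)=\tilde f(sP_t\lambda)$, so ${\Gamma^t}_{\mathcal G}^{f^t}=P_t^{-1}(\tilde\Gamma_{\mathcal G}^{\tilde f})$. Because $P_t$ is a linear isomorphism, interiors correspond under $P_t$. It therefore suffices to show
$$P_t(0,\cdots,0,1)=t\,(1,\cdots,1,1-\varrho/t)\in\mathring{\tilde\Gamma}_{\mathcal G}^{\tilde f}.$$
Here I use that $\tilde\Gamma_{\mathcal G}^{\tilde f}$ is a convex cone: if $\lambda,\mu$ lie in it, concavity gives $\tilde f(s(a\lambda+b\mu))\geq a\tilde f(s\lambda)+b\tilde f(s\mu)$, which stays bounded below.

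By the definition of $\tilde\varrho=\varrho$, the point $(1,\cdots,1,1-\varrho)$ lies in the closure of this cone. The point $\vec{\bf1}$ lies in its interior, since $\Gamma_n\subseteq\mathring{\tilde\Gamma}_{\mathcal G}^{\tilde f}$ by Lemma \ref{proposition2-Gamma}. For $t>1$, the point $(1,\cdots,1,1-\varrho/t)$ is a convex combination of these two with positive weight on $\vec{\bf1}$. By the standard fact that the half-open segment from an interior point to a closure point of a convex set lies in the interior, the claim follows. Thus $(f^t,\Gamma^t)$ is of type 2.

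The main subtlety is the bookkeeping in item (2): identifying ${\Gamma^t}_{\mathcal G}^{f^t}$ as $P_t^{-1}(\tilde\Gamma_{\mathcal G}^{\tilde f})$, and using $\tilde\varrho=\varrho$ together with convexity to push $(1,\cdots,1,1-\varrho/t)$ strictly inside. The remaining items are direct consequences of Lemma \ref{lemma2-key}.
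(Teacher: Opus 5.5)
Your proof is correct and complete. The paper states this theorem without a written proof, and your argument is the one its preparatory lemmas support:
\begin{itemize}
\item Lemma \ref{proposition2-Gamma}(2) gives $\varrho_{\tilde{\Gamma}_{\mathcal{G}}^{\tilde{f}}}=\varrho$, so $t=1$ recovers $(f,\Gamma)$.
\item The splitting $t\sum_i\lambda_i\vec{\bf1}-\varrho\lambda=\mu^{\lambda}+(t-1)\sum_i\lambda_i\vec{\bf1}$, combined with Lemma \ref{lemma2-key}, gives items (1) and (3).
\item Writing $(1,\cdots,1,1-\varrho/t)$ as a convex combination of the interior point $\vec{\bf1}$ and the boundary point $(1,\cdots,1,1-\varrho)$ of the convex cone $\tilde{\Gamma}_{\mathcal{G}}^{\tilde{f}}$ gives item (2).
\end{itemize}
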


 From this,
 each equation of the form  \eqref{equ1-real}  
 can be reformulated as 
 in the following form
 \begin{equation}
 	\label{equation-n-varrho}
 	\begin{aligned}
 		\tilde{f}(\lambda[ \Delta u \cdot g-\varrho\nabla^2u +\tilde{\chi}])=\psi,  
 		\,\,
 		\varrho\neq0,   \nonumber
 	\end{aligned}
 \end{equation}
 and vice versa.

 \begin{proposition} 
 	\label{thm2-equivalent-equations}
 	
 	Fix a pair $(f,\Gamma)$ and pick a positive constant $\varrho$ with
 	$n-\varrho_{{\Gamma}_{\mathcal{G}}^f} \leq \varrho <n$. Let $(\tilde{f},\tilde{\Gamma})$ be 
 	as above
 	such that  $\varrho\leq\varrho_{\tilde{\Gamma}_{\mathcal{G}}^{\tilde{f}}}.$ 
 	Given a  symmetric $(0,2)$-tensor $\chi$, denote
 	$\tilde{\chi}:=(\mathrm{tr}_g\chi)\cdot g-\varrho \, \chi.$
 	Then for any $C^2$-function $u$ with $\lambda[\nabla^2u+\chi]\in\Gamma$,  
 	\begin{equation}
 		\begin{aligned}
 			{f}(\lambda[ \nabla^2u+\chi ])
 			=	\tilde{f}(\lambda[ \Delta u \cdot g-\varrho\nabla^2u +\tilde{\chi}]). \nonumber
 		\end{aligned}
 	\end{equation}	
 	Moreover,  $(0,\cdots,0,1)\in\mathring{{\Gamma}}_{\mathcal{G}}^{{f}} \iff  \varrho<\varrho_{\tilde{\Gamma}_{\mathcal{G}}^{\tilde{f}}}.$
 	
 \end{proposition}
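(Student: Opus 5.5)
The identity is pointwise linear algebra, so the plan is to prove it at one point and then handle the equivalence using the cone computations from Lemma \ref{proposition2-Gamma}.

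\emph{Step 1: the identity.} Fix a point $x\in \overline{M}$ and set $A=\nabla^2u+\chi$. Choose a $g$-orthonormal frame at $x$ that diagonalizes $A$, so $A=\mathrm{diag}(\lambda_1,\dots,\lambda_n)$ with $\lambda=\lambda[A]$.

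Since $\Delta u=\mathrm{tr}_g\nabla^2 u$ and $\tilde\chi=(\mathrm{tr}_g\chi)\, g-\varrho\chi$, we have
\[
\Delta u\cdot g-\varrho\nabla^2u+\tilde\chi=(\mathrm{tr}_gA)\, g-\varrho A .
\]
In the same frame this tensor is diagonal with entries $\sum_j\lambda_j-\varrho\lambda_i$. Hence
\[
\lambda[\Delta u\cdot g-\varrho\nabla^2u+\tilde\chi]=\mu^{\lambda}=P(\lambda),
\]
up to permutation, and permutations are harmless because $\tilde f$ and $\tilde\Gamma$ are symmetric. From $\lambda\in\Gamma$ we get $\mu^\lambda\in\tilde\Gamma=P(\Gamma)$. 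By definition \eqref{def1-f}, $\tilde f(\mu^\lambda)=f(\lambda)$, which is the claimed identity.

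The endpoint $\varrho=n-\varrho_{\Gamma_{\mathcal G}^f}$ is allowed here even though Lemma \ref{proposition2-Gamma} assumed a strict inequality. This costs nothing: $P$ is still invertible since $0<\varrho<n$, and the identity uses only the definition of $\tilde f$, not ellipticity.

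\emph{Step 2: the equivalence.} Use ${\tilde\Gamma}_{\mathcal G}^{\tilde f}=P(\Gamma_{\mathcal G}^f)$, which holds because $P$ is linear, so $\tilde f(tP\lambda)=f(t\lambda)$. Since $P$ is a linear homeomorphism, it also maps interiors to interiors and boundaries to boundaries. Compute directly:
\[
P(\vec{\bf1}-\tfrac{n}{\varrho}e_n)
=\big(n-\tfrac n\varrho\big)\vec{\bf1}-\varrho\vec{\bf1}+n e_n
=(n-\varrho-\tfrac n\varrho)\vec{\bf 1}+ne_n .
\]
Rather than chase that constant, it is simpler to compute $P^{-1}$ of the test vectors:
\[
P^{-1}(e_n)=\frac{1}{\varrho(n-\varrho)}\big(\vec{\bf1}-(n-\varrho)e_n\big),
\]
which is a positive multiple of $(1,\dots,1,1-(n-\varrho))$, and
\[
P^{-1}\big(\vec{\bf1}-\varrho e_n\big)=\frac{1}{\varrho(n-\varrho)}\big((n-\varrho)\vec{\bf1}-(n-\varrho)\vec{\bf1}+\varrho(n-\varrho)e_n\big)=e_n .
\]
So $e_n\in\mathring\Gamma_{\mathcal G}^f$ if and only if $(1,\dots,1,1-\varrho)\in\mathring{\tilde\Gamma}_{\mathcal G}^{\tilde f}$.

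\emph{Step 3: translating membership into a strict inequality for $\varrho$.} It remains to show that, for a convex cone $C\supseteq\Gamma_n$, the vector $(1,\dots,1,1-s)$ lies in $\mathring C$ if and only if $s<\varrho_C$. The vectors $(1,\dots,1,1-s)$ form a ray in $s$ starting inside $\Gamma_n\subseteq C$. By convexity, the set of admissible $s$ is an interval $[0,\varrho_C)$ on which the vector is interior, the vector lies on $\partial C$ at $s=\varrho_C$, and it is exterior beyond. This gives $e_n\in\mathring\Gamma^f_{\mathcal G}\iff\varrho<\varrho_{\tilde\Gamma_{\mathcal G}^{\tilde f}}$.

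The main care point is the well-definedness of $\varrho_C$ when $C$ contains the whole ray, where one sets $\varrho_C=+\infty$. One must also check $\Gamma_n\subseteq\tilde\Gamma_{\mathcal G}^{\tilde f}$ at the endpoint value of $\varrho$. For that, we can use $\Gamma_n\subseteq\Gamma^f_{\mathcal G}$, which holds by Lemma \ref{lemma2-key} together with monotonicity, and then $P^{-1}(\Gamma_n)\subseteq\overline{\Gamma^f_{\mathcal G}}$ via the formula for $P^{-1}$ and $n-\varrho\le\varrho_{\Gamma_{\mathcal G}^f}$.
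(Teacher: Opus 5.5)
Your proposal is correct and follows essentially the paper's argument. The identity is just the definition \eqref{def1-f} of $\tilde f$ read in a frame diagonalizing $\nabla^2u+\chi$, and the equivalence comes from $\tilde\Gamma_{\mathcal G}^{\tilde f}=P(\Gamma_{\mathcal G}^f)$ together with $P(0,\dots,0,1)=(1,\dots,1,1-\varrho)$, the same facts behind items (1)--(2) of Lemma \ref{proposition2-Gamma}. Your Step 3 ray argument and your endpoint check $\varrho=n-\varrho_{\Gamma_{\mathcal G}^f}$ fill in details the paper leaves implicit; the $\varrho_C=+\infty$ case never arises, since $C\subseteq\Gamma_1$ forces $\varrho_C\le n$.
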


 \section{On the type 2 equations}
 \label{sec3-proof-of-type2-existence}

 \subsection{$C^0$-estimate and construction of   admissible functions}
 \label{subsec1-admissible-C0}

We use Morse theory  to construct admissible functions. 
 The strategy follows   some ideas of   \cite{yuan-PUE-conformal} 
 (see also \cite{yuan-PUE2-note,yuan-PUE4}).
 For our purpose, we need the following:
 \begin{lemma}
 	\label{lemma-diff-topologuy}
 	Let 
 	$\overline{M}$
 	be a compact connected  
 	manifold of real dimension $n\geq 2$ with smooth boundary. Then there is a smooth function $v$ without any critical points. 
 \end{lemma}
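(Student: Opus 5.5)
The plan is to double $\overline{M}$ into a closed manifold, take a Morse function there, and push all of its critical points out of $\overline{M}$ through the boundary.

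First, extend $\overline{M}$ slightly to an open manifold. Attach a collar $\partial M\times[0,1)$ along $\partial M$ to obtain $M'\supset\overline{M}$. Alternatively, form the double $D=\overline{M}\cup_{\partial M}\overline{M}$, which is a closed connected $n$-manifold, and let $N=D\setminus\{\text{closed ball in the second copy}\}$. Since $\partial M\neq\emptyset$ and $\overline{M}$ is connected, every component of the second copy meets $\partial M$. So $\overline{M}$ sits inside a connected manifold $N$ in which the complement $N\setminus \overline{M}$ is nonempty and open. Moreover, every point of $\overline{M}$ can be joined within $N$ to $N\setminus\overline{M}$.

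Next, take a Morse function $w$ on $D$ (Morse functions are dense). It has finitely many critical points $p_1,\dots,p_m$. If some lie in $\overline{M}$, I move them out. The key tool is the homogeneity lemma: for a connected manifold $N$ of dimension $n\ge 2$ and finite sets $\{p_i\}$, $\{q_i\}$ of distinct points, there is a diffeomorphism $h$ of $N$, isotopic to the identity and compactly supported, with $h(p_i)=q_i$. This is the standard Milnor lemma, and $n\ge2$ is what lets the points move past each other. Choose distinct $q_i$ in the open set $D\setminus\overline{M}$, which is nonempty because $\partial M\ne\emptyset$ and $\overline{M}$ is a proper compact submanifold with boundary of the connected closed manifold $D$. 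Then set $v:=w\circ h^{-1}$ and restrict it to $\overline{M}$. Its critical points are exactly the $h(p_i)=q_i$, and none of these lie in $\overline{M}$. So $v|_{\overline{M}}$ is smooth with $\nabla v\neq0$ everywhere on $\overline{M}$, including on $\partial M$.

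The only step that needs care is ensuring that the moved points leave all of $\overline{M}$, including boundary points, and that the homogeneity lemma is applied on a connected ambient manifold. Applying it on the connected double $D$ handles both. If $D$ were not available, the alternative would be to use the collar $M'$, push critical points along paths into $M'\setminus\overline{M}$, and then restrict. Either way, connectedness of $\overline{M}$ and $n\ge 2$ are exactly the hypotheses the homogeneity lemma needs.
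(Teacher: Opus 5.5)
Your proposal is correct and follows the paper's proof: double $\overline{M}$, take a Morse function $w$ on the connected double, use the homogeneity lemma to carry the critical points into the complement of $\overline{M}$, and restrict $w\circ h^{-1}$ to $\overline{M}$. The only difference is cosmetic. You send every critical point to a point outside $\overline{M}$, whereas the paper moves only those in $\overline{M}$ and fixes the rest (which is why it requires the $q_i$ to avoid the remaining critical points); your collar and punctured-double alternatives are not needed.
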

 
 \begin{proof}

 	We include a proof, following \cite{yuan-PUE-conformal}.
  	Let $X$ be the double of $\overline{M}$. It is 
  	obtained by 		 gluing two copies of $\overline{M}$ together along their common boundary.
 	 Let $w$ be a smooth Morse function on $X$ with the critical set $\{p_i\}_{i=1}^{m+k}$, among which $p_1,\cdots, p_m$ are all the critical points  being in $\overline{M}$. 
 	Pick $q_1, \cdots, q_m\in X\setminus \overline{M}$ but not the critical point of $w$. By homogeneity lemma 
 	(see \cite{Milnor-1997}), 
 	one can find a diffeomorphism
 	$h: X\to X$, which is smoothly isotopic to the identity, such that  
 	$h(p_i)=q_i$ for $1\leq  i\leq m$, and  	$h(p_i)=p_i$ for $m+1\leq  i\leq  m+k$. 
 	Then $v=w\circ h^{-1}\big|_{\overline{M}}$ is the desired 
 	function.
 	
 \end{proof}

 By Lemma \ref{lemma-diff-topologuy}, 
 we can take a smooth  function  $v$  with  $v\geq1$ and
 $\mathrm{d} v\neq 0$ on $\overline{M}$. Let  
 \begin{equation}
 	\label{const-admissible2}
 	\begin{aligned}
 		\underline{v}=e^{tv}.
 	\end{aligned}
 \end{equation}
 The straightforward computation yields that 
 \begin{align*}
 	\chi+\nabla^2 \underline{v} =\chi +te^{tv} (\nabla^2 v+t\mathrm{d}v\otimes \mathrm{d}v). 
 \end{align*}
 
 Let  $R_0$ be a positive constant such that 
 $
 f(R_0\vec{\bf1})>\psi  \textrm{ in } 
 \overline{M}. 
 $
 By $(0,\cdots,0,1)\in\mathring{{\Gamma}}_{\mathcal{G}}^{{f}}$, we may pick $t\gg1$ such that
 \begin{equation}
 	\begin{aligned}
 		\lambda[\chi +te^{tv} ( \nabla^2 v+\frac{t}{2}\dd v\otimes \dd v)]\in\mathring{{\Gamma}}_{\mathcal{G}}^{{f}},  
 		\,\,
 		\lambda[\frac{t^2 e^{tv}}{2} \dd v\otimes \dd v -R_0 g] \in\mathring{{\Gamma}}_{\mathcal{G}}^{{f}}.
 		\nonumber
 	\end{aligned}
 \end{equation} 
 In particular 
 \begin{equation}
 	\label{admissible-construct2}
 	\begin{aligned}
 		\lambda[ \nabla^2 \underline{v}+\chi]\in \mathring{\Gamma}_{\mathcal{G}}^f \textrm{ in }\overline{M}.
 	\end{aligned}
 \end{equation} 
 
 Using Lemma \ref{lemma2-key} twice,
  we conclude that 
 \begin{equation}
 	\label{ineq2-admissible-construct}
 	\begin{aligned}
 		f( \lambda[	\nabla^2 \underline{v}+\chi]) 
 		\geq f(\lambda[\frac{t^2 e^{tv}}{2} \dd v\otimes \dd v])
 		\geq f(R_0\vec{\bf1})
 		>\psi.
 	\end{aligned}
 \end{equation} 
 The comparison principle then implies that
 \begin{equation}
 	\label{lower-bound1}
 	\inf_{\overline{M}} (u-\underline{v})=
 	\inf_{\partial M} (\varphi-\underline{v}).
 \end{equation}
 
 The upper bound of $u$ is standard. 
 Let $h$ be the solution to 
 \begin{equation}
 	\begin{aligned}
 		\Delta h +\mathrm{tr}_g \chi=0 \mbox{ in } \overline{M}, \,\, \,
 		h=	\varphi \mbox{ on } \partial M. \nonumber
 	\end{aligned}
 \end{equation}
 The existence and regularity of $h$ can be found in standard  textbooks; see \cite[Theorem 6.14]{GT1983}  
 and  \cite[Theorem 4.8]{Aubin1998-nonlinear-book}. 
 Then 
 \begin{equation}
 	\label{upper-bound1}
 	u \leq h \mbox{ in } \overline{M}, \,\, \,
 	u=	h=	\varphi \mbox{ on } \partial M. 
 \end{equation}

 \subsection{Local and global estimates for type 2 equations}

 \begin{proposition}
 	\label{thm1-c1-c2-local-estimates}
 	
 	In addition to \eqref{concave}, \eqref{elliptic} and \eqref{nondegeneracy1}, we assume  
 	$(-\delta_0,0,\cdots,0,1)\in \mathring{\Gamma}_{\mathcal{G}}^f$ for some 	$\delta_0>0$. 
 	Let $B_r\subset M$ be a geodesic ball with radius $r>0$. Let $u\in C^4(M)\cap C^2(\overline{M})$ be an admissible solution to the equation \eqref{equ1-real}. Then 
 	\begin{equation}
 		\label{estimate2-interior}
 		\begin{aligned}
 			\sup_{B_{r/2}} (|\nabla^2u|+|\nabla u|^2)\leq C_1,  
 		\end{aligned}
 	\end{equation}
 	where $C_1$ depends on $\delta_0^{-1}$, $r^{-1}$,  $|u|_{C^0(B_r)}$,  $|\psi|_{C^2(B_r)}$ and geometric quantities 
 	on $B_r$. 
 	Moreover,
 	\begin{equation}
 		\label{estimate2-global}
 		\begin{aligned}
 			\sup_{M}(|\nabla^2u|+|\nabla u|^2)\leq C_2(1+\sup_{\partial M}|\nabla^2u|+\sup_{\partial M}|\nabla u|^2), 
 		\end{aligned}
 	\end{equation}
 	where $C_2$ depends on $\delta_0^{-1}$,  $|u|_{C^0(M)}$, $|\psi|_{C^2(\overline{M})}$ and other known data. Furthermore, both $C_1$ and $C_2$ are independent of $\delta_{\psi,f}^{-1}$, where and hereafter
 	\begin{equation}
 		\label{def1-delta-psi-f}
 		\begin{aligned}
 			\delta_{\psi,f}:=\inf_M\psi-\sup_{\partial\Gamma} f. 
 		\end{aligned}
 	\end{equation}

 \end{proposition}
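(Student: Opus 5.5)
The key structural input is Lemma \ref{lemma1-PUE}: under the hypothesis $(-\delta_0,0,\cdots,0,1)\in\mathring{\Gamma}_{\mathcal{G}}^f$ we have the partial uniform ellipticity $f_i(\lambda)\geq \frac{\delta_0}{n}\sum_j f_j(\lambda)$ for every $i$. Hence the linearized operator $L=F^{ij}\nabla_{ij}$ is uniformly elliptic relative to $\mathcal{F}:=\sum_i f_i$, with ratio bounded by $n/\delta_0$. Concavity together with $\sum_i f_i(\lambda)\lambda_i\geq f(\lambda)-f(R\vec{\bf1})+R\mathcal{F}$ also gives $\mathcal{F}\geq c_0>0$ for a $c_0$ depending only on $f$ and $\sup\psi$. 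We do not invoke $\delta_{\psi,f}$ anywhere, so that the constants remain independent of it. With uniform ellipticity in hand, the first and second derivatives can be controlled by one Bernstein-type quantity. We also note that admissibility gives $\lambda[\nabla^2u+\chi]\in\Gamma\subseteq\Gamma_1$, so $\Delta u\geq-C$. Moreover $\Gamma\neq\mathbb{R}^n$ together with the partial uniform ellipticity gives $|\lambda_i|\leq C(1+\sum_j\lambda_j)$. Indeed, by $(-\delta_0,0,\dots,0,1)\in\mathring\Gamma_{\mathcal G}^f\subseteq\Gamma$ and symmetry, $\lambda_i\geq -C\sum_{j}\lambda_j^+$ on $\Gamma$. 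Therefore $|\nabla^2u|\leq C(1+\Delta u)$, and it suffices to bound $\Delta u$ (or $\lambda_{\max}$) and $|\nabla u|^2$.

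For the global estimate \eqref{estimate2-global}, we consider the test function
\[
Q=\Delta u+\tfrac{1}{2}|\nabla u|^2+ A u,
\]
or, more robustly, $\log\lambda_{\max}(\nabla^2u+\chi)+\varphi(|\nabla u|^2)+\phi(u)$ as in Hou--Ma--Wu and Sz\'ekelyhidi. At an interior maximum point, we differentiate the equation twice. Concavity of $F$ disposes of the third-order terms coming from $F^{ij,kl}$. The commutator terms involve curvature and are bounded by $C(1+|\nabla^2u|+|\nabla u|)\mathcal{F}$. The term $F^{ij}\nabla_i\nabla_k u\nabla_j\nabla_k u$ arising from $L(\frac12|\nabla u|^2)$ is at least $\frac{\delta_0}{n}\mathcal{F}|\nabla^2u|^2$ by \eqref{pue-3}. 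This is the point where the type 2 assumption enters decisively. Thus the good quadratic term dominates all the bad terms $C\mathcal{F}(1+|\nabla^2 u|+|\nabla u|^2)$, as well as the $\psi$-derivative terms $|\nabla\psi|\,|\nabla u|+|\nabla^2\psi|$, which are absorbed using $\mathcal{F}\geq c_0$. The gradient terms require more care. In $L(\frac12|\nabla u|^2)$ we get $F^{ij}\nabla_iu_k\nabla_ju_k\geq \frac{\delta_0}{n}\mathcal{F}|\nabla^2u|^2$, and the Ricci-type terms are $\geq -C\mathcal{F}|\nabla u|^2$. These are handled by adding $Au$ with $L u=F^{ij}(u_{ij}+\chi_{ij})-F^{ij}\chi_{ij}$, or by using $e^{Bu}$ weights. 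If that proves delicate, we pass to the quantity $|\nabla^2 u|+|\nabla u|^2$ via $Q=\log(\lambda_{\max})+\frac{B}{2}|\nabla u|^2$ with $B$ large. The conclusion is that at an interior maximum, $|\nabla^2u|+|\nabla u|^2\leq C$, and otherwise the maximum is on $\partial M$. This yields \eqref{estimate2-global}, since $u$ is bounded in $C^0$.

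For the interior estimate \eqref{estimate2-interior}, we multiply by a cutoff $\eta$ supported in $B_r$ with $\eta=1$ on $B_{r/2}$, $|\nabla\eta|^2\leq C\eta/r^2$ and $|\nabla^2\eta|\leq C/r^2$, and maximize $\eta\, Q$. The extra terms $F^{ij}\eta_i Q_j/\eta$ and $Q\,L\eta$ are controlled using the first-order condition $\nabla(\eta Q)=0$ and once more uniform ellipticity: since $F^{ij}\eta_i\eta_j\leq\mathcal{F}|\nabla\eta|^2$, everything is comparable to $\mathcal{F}$ and is absorbed by $\frac{\delta_0}{n}\mathcal{F}|\nabla^2 u|^2\eta$. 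We expect this localized absorption, in particular handling the cross term $\eta\,\nabla\eta\cdot\nabla Q$ when $Q$ contains the $\log\lambda_{\max}$ piece, to be the main obstacle. The first thing to try is to work with the linear quantity $\Delta u+\frac{B}{2}|\nabla u|^2$, where all terms are explicit and uniform ellipticity makes the standard Pogorelov-free cutoff argument for uniformly elliptic concave equations go through.
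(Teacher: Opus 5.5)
Your overall architecture is the paper's: get \eqref{pue-3} from Lemma \ref{lemma1-PUE}, run a Guan-type Bernstein argument with a cutoff for \eqref{estimate2-interior}, and take $\eta\equiv1$ for \eqref{estimate2-global}. The paper itself only cites Guan for this and omits details. Two of the concrete steps you rely on are false, however, and the first one breaks your preferred test function.

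\emph{Reduction to $\Delta u$.} The claim $|\lambda_i|\le C(1+\sum_j\lambda_j)$ on $\Gamma$ is wrong. The hypotheses allow $\Gamma=\Gamma_1$: for example $f=\log\sigma_1$, where $\Gamma_{\mathcal G}^f=\Gamma_1$ contains $(-\delta_0,0,\cdots,0,1)$ for $\delta_0<1$. Yet $(-k,0,\cdots,0,k+1)\in\Gamma_1$ has trace $1$. Your inequality $\lambda_i\ge-\sum_j\lambda_j^+$ is just $\Gamma\subseteq\Gamma_1$, and it only gives $|\lambda_i|\le (n-1)\lambda_{\max}^+$, i.e.\ control by the largest eigenvalue, not by the trace. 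Consequently, maximizing $\Delta u+\frac B2|\nabla u|^2$ yields at best upper bounds on $\Delta u$ and $|\nabla u|$, not on $|\nabla^2u|$. In the example above the bound on $\Delta u$ is no more than the equation itself. You must work with a quantity dominating $\lambda_{\max}(\nabla^2u+\chi)$, such as your $\log\lambda_{\max}+\varphi(|\nabla u|^2)+\phi(u)$. In that case concavity alone does not remove the third-order terms: the term $-F^{ii}|\nabla_i\mathfrak g_{11}|^2/\lambda_1^2$ must be handled through the first-order condition, as in Hou--Ma--Wu. This is routine under \eqref{pue-3}, but it is the actual work.

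\emph{Lower bound on $\sum_i f_i$.} The inequality you wrote is reversed. Concavity tested against $R\vec{\bf1}$ gives $\sum_i f_i\lambda_i\le f(\lambda)-f(R\vec{\bf1})+R\sum_i f_i$. A uniform bound $\sum_i f_i\ge c_0$ would need $\sum_i f_i\lambda_i\ge -C\sum_i f_i$. That is what \eqref{addistruc} supplies via Lemma \ref{lemma2-key}, and it is not assumed here. In general you only get $\sum_i f_i\ge (f(R\vec{\bf1})-\sup\psi)/(R+|\lambda|)$. So your absorption of the $\psi$-terms, in particular $|\nabla\psi||\nabla u|$ in the gradient part, does not go through as written.

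The correct inequality still rescues the argument. It says $-\mathcal L u\ge f(R\vec{\bf1})-\sup\psi-C\sum_i f_i$, a positive constant not multiplied by $\sum_i f_i$. Equivalently, $\mathcal L(\underline v-u)\ge f(R_0\vec{\bf1})-\psi>0$ for a function as in \eqref{const-admissible2}, or a local analogue on $B_r$. Putting $\phi(u)$ with $\phi'<0$ into the test function produces this constant, and it absorbs the $O(1)$ terms coming from derivatives of $\psi$. The remaining bad terms carry a factor $\sum_i f_i$, and by \eqref{pue-3} they are absorbed by $\phi''F^{ij}\nabla_iu\nabla_ju\ge\phi''\frac{\delta_0}{n}\sum_i f_i\,|\nabla u|^2$ and by the quadratic Hessian term. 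This is also what makes the constants depend on $f(R\vec{\bf1})-\sup\psi$ but not on $\delta_{\psi,f}$.
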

 
 \begin{proof}
 	By Lemma \ref{lemma1-PUE},  
 	${f}$ is of fully uniform ellipticity, i.e. $f$ satisfies \eqref{pue-3}.
 	As in \cite{Guan2008IMRN} (see also \cite{yuan2020conformal,yuan-PUE-conformal})
 	we can derive local estimates \eqref{estimate2-interior} for second and first order derivatives. 
 	When taking the cutoff function $\eta\equiv1$ in the proof, we get \eqref{estimate2-global}. 
 	Since the proof is somewhat standard, we omit the details here.
 \end{proof}

 \subsection{Boundary estimate for first derivatives}  \label{subsec3-boundary-c1}

 Let $\sigma$ be the distance function 
 to $\partial M$, let $M_{\delta}:=\{z\in M: \sigma(z)<\delta\}$.  
In the following, we use    
 $\sigma$  
 to construct a local   $\mathring{{\Gamma}}_{\mathcal{G}}^{{f}}$-admissible function $\underline{w}$ subject to
 \begin{equation} 	
 	\label{local-subsolution2-inequ2}
 	\begin{aligned}  	
 		f(\lambda[\nabla^2 \underline{w}+\chi])  
 		>\psi \mbox{ in } M_{\delta_0},  
 	\end{aligned}  
 \end{equation} 
 \begin{equation}
 	\label{admifunction1-local}
 	\begin{aligned}
 		u\geq \underline{w}  \mbox{ in }   M_{\delta_0}, \,\,
 		\underline{w}=\varphi \mbox{ on } \partial M, 
 	\end{aligned}
 \end{equation} 
 for  some   $\delta_0>0$. 
 As in   \cite{Guan2008IMRN}   take
 $	w(z)=  
 2\log \frac{\delta^2}{\delta^2 +\sigma(z)}.$ 
 The  computation gives  
 \begin{equation}
 	\begin{aligned}
 		\chi+\nabla^2 (w+\varphi) 
 		= \chi +\nabla^2\varphi+\frac{2\dd \sigma\otimes \dd \sigma}{(\delta^2+ \sigma)^2}-\frac{2\nabla^2\sigma}{\delta^2+\sigma}. \nonumber
 	\end{aligned}
 \end{equation}   
 Notice  $|\nabla\sigma| =1$ on $\partial M$ and $(0,\cdots,0,1)\in\mathring{{\Gamma}}_{\mathcal{G}}^{{f}}$. Take $0<\delta_1\ll1$ so that 
 \begin{equation} 	
 	\begin{aligned}  	
 		\lambda\big[\chi+\nabla^2\varphi+   \frac{\dd\sigma \otimes \dd\sigma}{(\delta^2+ \sigma)^2}-\frac{2\nabla^2\sigma }{\delta^2+ \sigma}\big]
 		\in \mathring{{\Gamma}}_{\mathcal{G}}^{{f}} \textrm{ in } M_{\delta}  \nonumber
 	\end{aligned}  
 \end{equation} 
 for $0<\delta\leq \delta_1$. 
 By Lemma  \ref{lemma2-key},
 there is $0<\delta_2\leq \delta_1$ such that 
 for $0<\delta\leq \delta_2$
 \begin{equation} 	
 	\begin{aligned}  	
 		f(\lambda[\chi+\nabla^2 (w+\varphi)]) \geq \,&
 		f(\lambda[\frac{\dd\sigma\otimes\dd\sigma}{(\delta^2+ \sigma)^2}])  
 		>\psi \mbox{ in } M_{\delta}. \nonumber 
 	\end{aligned}  
 \end{equation}  
 From \eqref{lower-bound1} there is a  positive uniform constant $\delta_3$ such that  
 \begin{equation} 
 	\label{lower-bound2}	
 	\begin{aligned} 
 		\underset{M}\inf(u-\varphi)\geq  
 		2\log\frac{\delta_3}{1+\delta_3}.  	 \nonumber
 \end{aligned} \end{equation}

 Denote $\delta_0=\min\{\delta_2,\delta_3\},$  and take
 \begin{equation}
 	\label{barrier1-w}
 	\begin{aligned}	
 		\underline{w}=2\log \frac{\delta_0^2}{\delta_0^2 +\sigma}+\varphi.  
 	\end{aligned}
 \end{equation} 
 Consequently, the comparison principle 
 yields that 
 \begin{equation}
 	\label{lower-comparison}
 	\begin{aligned}
 		\inf_{M_{\delta_0}}(u - \underline{w}) 
 		= \inf_{\partial M_{\delta_0}}		(u-\underline{w}) 
 		\geq 0. \nonumber
 	\end{aligned}
 \end{equation}
 Thus $\underline{w}$ satisfies \eqref{admifunction1-local} and \eqref{local-subsolution2-inequ2}    in $M_{\delta_0}.$
 From  \eqref{upper-bound1} and \eqref{admifunction1-local}, we get
 \begin{equation}
 	\label{boundary-gradient-1}	
 	\begin{aligned}  	
 		|\nabla u|\leq C \mbox{ on }  \partial  M.
 	\end{aligned} 
 \end{equation}
 
 \subsection{Boundary estimate for second derivatives}

 Now we   review some notation.  
 For   $x_0\in\partial M$ we choose a smooth  local orthonormal frame $e_1, \cdots, e_n$ around $x_0$ such that
 $e_n$ is unit inner normal vector field when restricted to $\partial M$. 
 Denote
 $ 
 g_{ij} 
 =g(e_i,e_j)$ and  $\{g^{ij} \} =  \{g_{ij} \}^{-1}.$
 Under the Levi-Civita connection  of $g$, $\nabla_{e_i}e_j=\Gamma_{ij}^k e_k$, and $\Gamma_{ij}^k$ denote the Christoffel symbols.  
 For simplicity we write 
 $$\nabla_i=\nabla_{e_i}, \nabla_{ij}=\nabla_i\nabla_j-\Gamma_{ij}^k\nabla_k, 
 \nabla_{ijk}=\nabla_i\nabla_{jk}-\Gamma_{ij}^l\nabla_{lk}-\Gamma^l_{ik}\nabla_{jl}, \cdots,\mbox{ etc}.$$ 
 The linearized operator of   \eqref{equ1-real} at the solution $u$, 
 say $\mathcal{L}$,
 is  locally 
 given by
 \begin{equation}
 	\label{def1-linearized-operator}
 	\mathcal{L} w=F^{i j} \nabla_{i j} w,  
 \end{equation}
 where 
 $\mathfrak{g}_{i j}=\chi_{i  j}+\nabla_{i j} u$ and  $F^{i j}=\frac{\partial F(\mathfrak{g})}{\partial \mathfrak{g}_{i j}}$.

 Let $\underline{w}$ be as in \eqref{barrier1-w} a local $\mathring{{\Gamma}}_{\mathcal{G}}^{{f}}$-admissible function obeying \eqref{admifunction1-local} and \eqref{local-subsolution2-inequ2}.  
 Applying
 such a local barrier function 
 $\underline{w}$ to Lemma \ref{lemma5-key},  there is a positive constant  $\varepsilon$ (independent of $\delta_{\psi,f}$) 
 such that   either  
 \begin{equation}
 	\label{inequ-case1-0}
 	\begin{aligned}
 		F^{i j} (\nabla_{ij}\underline{w} -\nabla_{ij}u )
 		\geq  \varepsilon F^{i j}g_{i  j} +\varepsilon,  
 	\end{aligned}
 \end{equation}
 or
 \begin{equation}
 	\label{inequ-case2-0}
 	\begin{aligned}
 		F^{i j}\geq  \varepsilon (1+F^{p  q}g_{p  q})g^{i j}.
 	\end{aligned}
 \end{equation}
 
 \begin{proposition}
 	\label{thm2-boundary-c2-estimate} 
 	Let $\delta_{\psi,f}$ be as in \eqref{def1-delta-psi-f}.
 	In addition to \eqref{concave}, \eqref{elliptic} and  \eqref{nondegeneracy1}, we assume    $(-\delta_0,0,\cdots,0,1)\in \mathring{\Gamma}_{\mathcal{G}}^f$ for some 	$\delta_0>0$.
 	Let $u\in C^4(M)\cap C^2(\overline{M})$ be an admissible solution to the Dirichlet problem  \eqref{equ1-real}-\eqref{boundary-data1}. Then
 	\begin{equation}
 		\label{estimate1-boundary-second}	\begin{aligned}
 			\sup_{\partial M}|\nabla^2u|\leq C, \textrm{ independent of } \delta_{\psi,f}^{-1},
 		\end{aligned}
 	\end{equation}
 	where $C$ is a  positive constant depending only on  $\delta_0^{-1}$, $|u|_{C^0(M)}$, $|\nabla u|_{C^0(\partial M)}$, $|\underline{w}|_{C^2(M_{\delta_0})}$,  
 	$|\varphi|_{C^3(\partial M)}$, and other known data. 
 \end{proposition}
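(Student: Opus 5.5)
We follow the standard three-part scheme for boundary second derivative estimates: pure tangential, mixed tangential–normal, and double normal. The new feature is that the type 2 structure lets us handle the double normal term without a subsolution.

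\emph{Tangential–tangential.} Near $x_0\in\partial M$, since $u-\varphi=0$ on $\partial M$, we have
\[
\nabla_{\alpha\beta}u=\nabla_{\alpha\beta}\varphi-\nabla_n(u-\varphi)\,\Pi(e_\alpha,e_\beta)\quad\text{on }\partial M,\ \alpha,\beta<n,
\]
where $\Pi$ is the second fundamental form. Combined with \eqref{boundary-gradient-1}, this gives $|\nabla_{\alpha\beta}u|\le C$ on $\partial M$.

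\emph{Mixed derivatives $\nabla_{\alpha n}u$.} We run the usual barrier argument in $M_\delta\cap B_\delta(x_0)$.
\begin{itemize}
\item Differentiating the equation gives a bound on $|\mathcal L\nabla_\alpha(u-\varphi)|$ of the form $C(1+F^{ij}g_{ij}+F^{ij}\mathfrak g_{ij})$. The commutator terms involve only first derivatives of $u$, and the term $\sum F^{ij}\mathfrak g_{ij}$ is controlled by concavity.
\item We handle the $|\nabla u|$-weighted terms by adding $\sum_{\beta<n}|\nabla_\beta(u-\varphi)|^2$ to the test function, as in Guan's work.
\item The barrier is $\Psi=A_1(\underline w-u)+A_2\sigma-A_3\sigma^2+A_4|x|^2$-type terms, with $\underline w$ the local $\mathring{\Gamma}^f_{\mathcal G}$-admissible function from \eqref{barrier1-w}.
\item The key input is Lemma \ref{lemma5-key}, i.e.\ the dichotomy \eqref{inequ-case1-0}/\eqref{inequ-case2-0}. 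In case \eqref{inequ-case1-0}, $F^{ij}\nabla_{ij}(\underline w-u)$ already supplies the positive quantity $\varepsilon(1+\sum F^{ii})$. In case \eqref{inequ-case2-0}, full ellipticity lets the quadratic term $\sum F^{ij}\nabla_{i\beta}u\nabla_{j\beta}u$ dominate.
\item In fact, Lemma \ref{lemma1-PUE} gives \eqref{pue-3}, so $F^{ij}\ge \frac{\delta_0}{n}(\sum F^{kk})g^{ij}$ holds everywhere. This simplifies matters: $\sum_{\beta}F^{ij}\nabla_{i\beta}u\nabla_{j\beta}u\ge \frac{\delta_0}{n}\sum F^{kk}\sum_{\beta<n}|\nabla_{i\beta}u|^2$.
\item The maximum principle then gives $\mathcal L\Psi\le0$, $\Psi\ge\pm\nabla_\alpha(u-\varphi)$ on the boundary of the small region, hence $|\nabla_{\alpha n}u(x_0)|\le C$.
\end{itemize}
No constant depends on $\delta_{\psi,f}^{-1}$, since only $\varepsilon$ from Lemma \ref{lemma5-key} and $\delta_0$ enter.

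\emph{Double normal $\nabla_{nn}u$.} Here the type 2 structure replaces the usual Trudinger-type argument.
\begin{itemize}
\item The lower bound is easy. Admissibility gives $\lambda\in\Gamma\subseteq\Gamma_1$, so $\mathrm{tr}\,\mathfrak g>0$, and with the bounds already obtained this gives $\nabla_{nn}u\ge -C$.
\item For the upper bound, write $\mathfrak g$ at $x_0$ with the tangential and mixed entries bounded, and suppose $\nabla_{nn}u=t$ is huge. By the standard eigenvalue lemma for matrices with one large diagonal entry (from the appendix), $\lambda(\mathfrak g)=(\lambda'_1,\dots,\lambda'_{n-1},t)+o(1)$, where $\lambda'$ are eigenvalues of the bounded $(n-1)\times(n-1)$ block.
\item By Lemma \ref{lemma1-PUE}, since $(0,\dots,0,1)\in\mathring\Gamma^f_{\mathcal G}$, as $t\to\infty$ we get $f(\lambda)\to\sup_\Gamma f$. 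This limit is uniform over the compact set of possible $\lambda'$: use Lemma \ref{lemma2-key} as in its proof, with $t e_n+\lambda-R\vec{\bf1}\in\mathring\Gamma^f_{\mathcal G}$.
\item Since $f(\lambda)=\psi\le\sup_M\psi<\sup_\Gamma f$ (strict, because $f(R_0\vec{\bf1})>\psi$ and $f$ is increasing), we get a uniform bound $t\le C$.
\end{itemize}

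\emph{Main obstacle.} The main difficulty is the mixed estimate: arranging the barrier so that the constants avoid $\delta_{\psi,f}^{-1}$, and controlling $\sum F^{ij}\mathfrak g_{ij}$ without \eqref{addistruc}. We use concavity, $F^{ij}\mathfrak g_{ij}\le f(\lambda)-f(R\vec{\bf1})+R\sum F^{ii}$, together with \eqref{pue-3}.

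One subtle point is that in the double normal step the uniformity in $\lambda'$ must hold even though $\lambda'$ may lie outside $\Gamma$. It does, because adding $te_n$ pushes the vector into $\Gamma$ for $t$ large, uniformly on compacts.
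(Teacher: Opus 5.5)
Your tangential bound, the lower bound $\nabla_{nn}u\ge -C$, and the setup of the mixed estimate (a barrier built from $\underline w$ together with the dichotomy \eqref{inequ-case1-0}/\eqref{inequ-case2-0}) match the paper. The gap is your conclusion $|\nabla_{\alpha n}u(x_0)|\le C$ with $C$ depending only on boundary data. There are two sources of interior gradient dependence. First, the barrier must dominate $\pm\nabla_\alpha(u-\varphi)$ on the interior part $M\cap\partial(M_\delta\cap B_\delta(x_0))$, where this quantity is controlled only by $\sup_M|\nabla u|$. Second, the commutator terms in $\mathcal L\nabla_\alpha u$ (curvature and derivatives of the frame) are of size $|\nabla u|\sum F^{ii}$ at interior points. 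At this stage you only know $|\nabla u|$ on $\partial M$. So this step honestly gives \eqref{bdry-estimate2}, $|\nabla_{\alpha n}u(x_0)|\le C(1+\sup_M|\nabla u|)$, which is what the paper obtains.

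With that factor present, your double-normal route is too weak. Your argument via Lemma \ref{yuan's-quantitative-lemma} and Lemma \ref{lemma2-key} is correct as far as it goes. However, that lemma applies only once $\mathfrak g_{nn}$ exceeds a multiple of $\sum_\alpha|\mathfrak g_{\alpha n}|^2$. So it yields $\mathfrak g_{nn}(x_0)\le C(1+\sum_\alpha|\mathfrak g_{\alpha n}(x_0)|^2)$, hence only $\sup_{\partial M}|\nabla^2u|\le C(1+\sup_M|\nabla u|^2)$. This is not the claimed estimate, and it cannot be closed against \eqref{estimate2-global}: both sides are then quadratic in $\sup_M|\nabla u|$, and $C_2$ is not small.

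The paper instead gets a bound that is \emph{linear} in the other second derivatives. By \eqref{concave-1} and \eqref{ineq2-admissible-construct}, $F^{ij}\nabla_{ij}u\le F^{ij}\nabla_{ij}\underline v$ at $x_0$. By \eqref{pue-3}, $F^{nn}\ge\frac{\delta_0}{n}\sum F^{kk}$. Together these give
\[
\nabla_{nn}u(x_0)\le\frac{n}{\delta_0}\Big(\sum_{i+j<2n}|\nabla_{ij}u(x_0)|+\sum_{i,j}|\nabla_{ij}\underline v(x_0)|\Big).
\]
Hence $\sup_{\partial M}|\nabla^2u|\le C(1+\sup_M|\nabla u|)$. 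Then \eqref{estimate2-global} gives $\sup_M|\nabla u|^2\le C+C\sup_M|\nabla u|$, which closes.

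You already invoked \eqref{pue-3} in the mixed step. Using it for the double-normal term, instead of the eigenvalue asymptotics, repairs your proof. Otherwise you would first need a separate global gradient bound that is independent of second derivatives.
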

 
 \begin{proof}

 	The proof of the boundary estimate for pure tangential derivatives
 	\begin{equation}  	
 		\label{bdry-estimate1}	
 		\begin{aligned}
 			|\nabla_{\alpha \beta}u(x_0)|\leq C_5, \, \forall \alpha, \beta\leq  n-1
 		\end{aligned}
 	\end{equation}
 	is standard by the boundary value condition. 
 	Here $C_5$ depends only on $|\varphi|_{C^2(\partial M)}$, $|u|_{C^1(\partial M)}$ and geometric quantities of $\partial M$.
 	
 	By a minor modification of 
 	\cite[Proposition 3.1]{yuan-PAMS2025},
 	we can employ \eqref{inequ-case1-0} and \eqref{inequ-case2-0}
 	to derive boundary estimate for mixed derivatives 
 	\begin{equation}  		\label{bdry-estimate2}		
 		\begin{aligned}
 			|\nabla_{\alpha n}u(x_0)|\leq C_6 (1+\sup_M |\nabla u|), \, \forall 1\leq \alpha< n,   \textrm{ independent of } \delta_{\psi,f}^{-1},
 		\end{aligned}
 	\end{equation}
 	where $C_6$ depends on $|u|_{C^0(\overline{M})}$, $|\varphi|_{C^3(\partial M)}$, $|\psi|_{C^1(\overline{M})}$, 
 	and other known data.

 	Let $\underline{v}$ be as in \eqref{const-admissible2}. 
 	From \eqref{concave-1} and 
 	\eqref{ineq2-admissible-construct},
 	we get
 	\begin{align*}
 		F^{n  n} \nabla_{n  n}u(x_0)\leq -\sum_{i+j<2n}  F^{i j}\nabla_{i  j} u(x_0)+\sum_{i,j=1}^nF^{i  j} \nabla_{i j} \underline{v}(x_0).
 	\end{align*}
 	By Lemma \ref{lemma1-PUE} or \eqref{pue-3}, 
 	$F^{n n}\geq 
 	\frac{\delta_0}{n} \sum_{i=1}^n F^{i i}$ at $x_0$. Thus   
 	\begin{equation}
 		\label{dounble-normal2}
 		\begin{aligned}
 			u_{n n}(x_0)\leq \frac{n}{\delta_0}  \sum_{i+j<2n}   |\nabla_{i j} u(x_0)|+ \frac{n}{\delta_0}\sum_{i,j=1}^n |\nabla_{i j} \underline{v} (x_0)|. 
 		\end{aligned}
 	\end{equation} 
 	Finally, by  \eqref{bdry-estimate1}   
 	and $\Gamma\subseteq\Gamma_1$, we see $\nabla_{nn}u(x_0)\geq -C$.  
 \end{proof}

 \subsection{Completion of the proof of Theorem \ref{thm1-type2}} 
 
 Let $\underline{v}$ be a  smooth   function,
  as constructed in \eqref{const-admissible2}, 
  satisfying    \eqref{admissible-construct2} and \eqref{ineq2-admissible-construct}. 
 For each $t\in [0,1]$, we consider 
 \begin{equation}
 	\label{equation-t}
 	\left\{ 
 	\begin{aligned}
 		f(\lambda[\nabla^2u^t+\chi])=\,& t\psi+(1-t) f(\lambda[\nabla^2 \underline{v}+\chi])  \,& \textrm{ in } \overline{M}, \\
 		u^t=\,& t\varphi+(1-t)\underline{v}  \,& \textrm{ on } \partial M. 
 	\end{aligned}
 	\right.
 \end{equation}  
 
 Denote
 \begin{equation}
 	\begin{aligned}
 		\mathrm{S}=\big\{t\in [0,1]:  
 		\textrm{\eqref{equation-t} has a unique smooth admissible solution for } t \big\}. \nonumber
 	\end{aligned}
 \end{equation}
 Obviously, for $t=0$ the Dirichlet problem has a unique admissible solution $\underline{v}$. 
 By the implicit function theorem and  
 the theory of linear elliptic equations, $\mathrm{S}$ is an  open subset of $[0,1]$.  
 Together with the Evans-Krylov theorem and Schauder theory, the  \eqref{lower-bound1},  \eqref{upper-bound1},  \eqref{estimate2-global}, \eqref{boundary-gradient-1} and  \eqref{estimate1-boundary-second} imply that  $\mathrm{S}$ is closed. 
 Thus $\mathrm{S}=[0,1]$.

 We   remark that subsolution assumption 
 is not needed in the present proof.

 \subsection{Proof of Theorem \ref{thm3-type2-degenerate}} 
 
 By  Theorem  \ref{thm1-type2}, 
 for each $\delta>0$ there is a unique smooth admissible function $u^\delta$ satisfying
 \begin{equation}
 	\label{equation2-t1}
 	\begin{aligned} 
 		f(\lambda[\nabla^2 u^\delta+\chi]) =\psi+\delta \textrm{ in } \overline{M}, \,\,
 		u^\delta =\varphi \textrm{ on } \partial M. 
 	\end{aligned}
 \end{equation}
 From  \eqref{lower-bound1},  \eqref{upper-bound1},  \eqref{estimate2-global}, \eqref{boundary-gradient-1} and \eqref{estimate1-boundary-second}, 
 we deduce that 
 \begin{equation}
 	\label{real-hessian2-estimate}
 	\begin{aligned}
 		\sup_{\overline{M}}|\nabla^2 u^\delta|\leq C, \textrm{   independent of } \delta^{-1}.
 	\end{aligned}
 \end{equation} 
 Take $\delta\to 0^+$, by passing to a proper subsequence, we may obtain a weak 
 $C^{1,1}$-solution 
 to Dirichlet problem for degenerate equation.

\section{Quantitative second order estimate}
\label{sec4}

We establish a real analogue of \cite[Theorem 1.7]{yuan1-closed} in the unbounded case.

\begin{proposition} 
\label{prop2-1-real}   
In addition to \eqref{concave}, \eqref{elliptic}, \eqref{unbounded-1}  and \eqref{nondegeneracy1}, 
we assume that the Dirichlet problem 
\eqref{equ1-real}-\eqref{boundary-data1} admits an admissible subsolution $\underline{u}\in C^2(\overline{M})$. 
For any  admissible solution  $u\in C^4(M)\cap C^2(\overline{M})$ to the
Dirichlet problem,  we have  second estimate of the quantitative form
\begin{equation}
	\label{boundary-estimate0-quantitative}
	\begin{aligned}
		\sup_{\overline{M}}|\nabla^2 u| \leq C(1+\sup_{  M}|\nabla u|^2),
	\end{aligned}
\end{equation}
and  the Hessian estimate \eqref{estimate1-Hessian-real}. 
\end{proposition}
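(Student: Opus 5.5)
The estimate splits into a global interior bound reducing everything to the boundary, a boundary bound for second derivatives of the quantitative form, and then a blow-up argument for the gradient that turns \eqref{boundary-estimate0-quantitative} into \eqref{estimate1-Hessian-real}. Throughout, the $C^0$ bound comes from comparison: $\underline{u}\le u\le h$, with $h$ the harmonic-type function from Section~\ref{subsec1-admissible-C0}. The boundary gradient bound $|\nabla u|\le C$ on $\partial M$ follows from the same two barriers, since $u-\underline{u}$ and $h-u$ vanish on $\partial M$ and are nonnegative.

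\textbf{Step 1: interior reduction.} First I will prove
\[
\sup_M|\nabla^2u|\le C\bigl(1+\sup_M|\nabla u|^2+\sup_{\partial M}|\nabla^2u|\bigr).
\]
I apply the maximum principle to a test function of the form $\log\lambda_{\max}(\mathfrak g)+\phi(|\nabla u|^2)+\eta(\underline{u}-u)$, with $\phi(s)=-\frac12\log(1-s/(2K))$, $K=1+\sup|\nabla u|^2$, in the spirit of Hou--Ma--Wu and Sz\'ekelyhidi.

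The key input replacing full uniform ellipticity is Lemma~\ref{lemma5-key} applied with the subsolution $\underline{u}$. At a maximum point, either $F^{ij}(\nabla_{ij}\underline{u}-\nabla_{ij}u)\ge\varepsilon(1+\sum F^{ii})$, or $F^{ij}\ge\varepsilon(1+\sum F^{pp})g^{ij}$. In the first case the $\eta$ term absorbs the bad curvature and third-order terms. In the second case the gradient term $\phi'F^{ii}|\nabla_{i}\nabla u|^2$ dominates $\lambda_1^2\sum F^{ii}/K$, which gives $\lambda_1\le CK$. Third-order terms are handled by concavity plus the usual splitting of indices according to whether $\lambda_i$ is comparable to $\lambda_1$.

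\textbf{Step 2: boundary estimate.} Pure tangential derivatives are bounded by the boundary data. For mixed derivatives I follow \cite[Proposition 3.1]{yuan-PAMS2025}, as in \eqref{bdry-estimate2}, using the barrier built from $\underline{u}-u$, $\sigma$ and tangential derivatives of $u-\varphi$. This gives $|\nabla_{\alpha n}u|\le C(1+\sup_M|\nabla u|)$.

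For the double normal derivative, PUE is not available, so I use the unbounded condition \eqref{unbounded-1} in the Caffarelli--Nirenberg--Spruck/Trudinger style. Write $A'=(\mathfrak g_{\alpha\beta})_{\alpha,\beta<n}$. By \eqref{unbounded-1}, there is $R$ with
\[
f\bigl(\lambda(A'),R\bigr)>\sup\psi+1
\]
uniformly in the bounded set of tangential data. If $\mathfrak g_{nn}$ were large compared with $C(1+\sup|\nabla u|^2)$, eigenvalue perturbation (Lemma on eigenvalues of almost-diagonal matrices in the Appendix) would give $\lambda(\mathfrak g)\approx(\lambda(A'),\mathfrak g_{nn})$ with error controlled by $|\mathfrak g_{\alpha n}|^2/\mathfrak g_{nn}$. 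Then $f>\psi$, a contradiction. Combined with $\Gamma\subseteq\Gamma_1$ for the lower bound, this yields $\nabla_{nn}u\le C(1+\sup|\nabla u|^2)$. Together with Step 1, this proves \eqref{boundary-estimate0-quantitative}.

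\textbf{Step 3: gradient bound by blow-up.} I expect this to be the main obstacle. Suppose $N_k=\sup|\nabla u_k|\to\infty$, attained at $x_k$. Rescale in coordinates around $x_k$ by $\tilde u_k(y)=u_k(x_k+y/N_k)$; then $|\nabla\tilde u_k|\le 1=|\nabla\tilde u_k(0)|$, $|\nabla^2\tilde u_k|\le C$ by \eqref{boundary-estimate0-quantitative}, and $\mathrm{osc}\,\tilde u_k\to0$. Since $u$ is Lipschitz-bounded on the boundary, $N_k\sigma(x_k)$ stays bounded away from $0$, so the limit domain is $\mathbb R^n$ or a half-space.

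The limit $\tilde u$ is a $C^{1,1}$ viscosity solution with $\lambda(\nabla^2\tilde u)\in\overline{\Gamma}$, with zero boundary data in the half-space case, and with $|\nabla\tilde u(0)|=1$ and $\tilde u$ bounded. To contradict this, I would invoke the Liouville-type result from \cite{yuan1-closed} (real version of Sz\'ekelyhidi's argument): a bounded $\Gamma$-subharmonic function on $\mathbb R^n$, $\Gamma\ne\mathbb R^n$, with bounded Hessian is constant. In the half-space case, the zero boundary values and boundedness also force constancy, in contradiction with $|\nabla\tilde u(0)|=1$. This gives \eqref{gradient-estimate1}, and hence \eqref{estimate1-Hessian-real}.
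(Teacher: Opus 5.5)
Your three-step structure matches the paper's. Step 1 and the tangential and mixed boundary bounds are fine: in Step 1 you run a Hou--Ma--Wu/Sz\'ekelyhidi maximum principle with Lemma \ref{lemma5-key} applied to $\underline u$, whereas the paper cites Guan, and either works. There are, however, two genuine gaps.

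\textbf{Double normal derivative.} You assert that \eqref{unbounded-1} gives an $R$ with $f(\lambda(A'),R)>\sup\psi+1$ ``uniformly in the bounded set of tangential data.'' This is not justified. The uniformity in \eqref{unbounded-1} holds only on compact subsets of $\Gamma_\infty$. As $\lambda(\mathfrak g_{\alpha\beta}(x_0))$ approaches $\partial\Gamma_\infty$, the required $R$ blows up. So does the factor $(2n-3)/\epsilon$ from the appendix lemma, because $\epsilon$ must be smaller than the distance to $\partial\Gamma_\infty$.

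Boundedness of the tangential data does not prevent this. At $x_0$ one has $\mathfrak g_{\alpha\beta}=\underline{\mathfrak g}_{\alpha\beta}+\nabla_n(u-\underline u)\nabla_{\alpha\beta}\sigma$, and on a general boundary the second term has no favorable sign. Keeping $\lambda(\mathfrak g_{\alpha\beta}(x_0))$ in a fixed compact subset of $\Gamma_\infty$ is the core of the Caffarelli--Nirenberg--Spruck boundary estimate. In the paper it is the bound $(1-t_0)^{-1}\le C$ of \eqref{bound-t0}, where $t_0$ is the first $t$ at which $t\underline{\mathfrak g}_{\alpha\beta}+\nabla_n(u-\underline u)\nabla_{\alpha\beta}\sigma$ reaches $\partial\Gamma_\infty$; it is obtained by a separate CNS-type argument using the subsolution.

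With that bound, $\mathfrak g_{\alpha\beta}-c\,\delta_{\alpha\beta}$ has bounded eigenvalues in $\Gamma_\infty$ for a uniform $c>0$. Then either your perturbation argument or the paper's splitting $A(R)=A'(R)+A''(R)$, together with concavity and Lemma \ref{lemma1-bdyestimate}, yields \eqref{boundary-estimate2-quantitative}. Without it, Step 2 does not close.

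\textbf{Blow-up.} The Liouville statement you invoke is false. For $\Gamma=\Gamma_1$ and $n\ge 3$, the function $v=-(1+|x|^2)^{(2-n)/2}$ is bounded, has bounded Hessian, and satisfies $\Delta v=n(n-2)(1+|x|^2)^{-(n+2)/2}>0$, yet it is not constant. The Liouville theorems behind this argument (\cite{Dinew2017Kolo,Gabor} and the extension in \cite{yuan1-closed}) also require the limit to be maximal, i.e.\ to satisfy a degenerate equation in the viscosity sense. Deriving this maximality from $f(N_k^2\lambda[\nabla^2\tilde u_k+N_k^{-2}\chi])=\psi$ is exactly where the structure of $f$ enters: Sz\'ekelyhidi's version uses \eqref{addistruc}, and \cite{yuan1-closed} removes that assumption. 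Your sketch omits this step.

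Two smaller problems in Step 3:
\begin{enumerate}
\item In the half-space case, boundedness and constant boundary values do not force constancy: $e^{-y_n}-1$ is convex, bounded, and vanishes on $\{y_n=0\}$. You also need $|\nabla\tilde u_k|\le C/N_k$ on the rescaled boundary, so that $\nabla\tilde u=0$ there.
\item $\mathrm{osc}\,\tilde u_k=\mathrm{osc}\,u_k$ is only bounded and does not tend to $0$. If it did, the $C^{1,\alpha}$ limit would be constant and no Liouville theorem would be needed.
\end{enumerate}
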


We first summarize some notation.  
Fix a point $x_0\in\partial M$, we choose a smooth  local orthonormal frame $e_1, \cdots, e_n$ around $x_0$ such that
$e_n$ is unit inner normal vector field when restricted to $\partial M$.  Denote $$\Gamma_\infty=\{(\lambda_{1}, \cdots, \lambda_{n-1}): (\lambda_{1}, \cdots, \lambda_{n-1},R)\in\Gamma \mbox{ for some } R>0\}.$$ 
Throughout this section,  the Greek letters $\alpha, \beta$ run from $1$ to $n-1$.

The main issue is to derive \eqref{boundary-estimate0-quantitative}.
Following an estimate presented in
\cite[Section 3]{Guan-Dirichlet},
we can prove that
\begin{equation}
\begin{aligned}
	\sup_{\overline{M}}|\nabla^2 u| \leq C(1+\sup_{M}|\nabla u|^2+\sup_{\partial M}|\nabla^2  u|). \nonumber
\end{aligned}
\end{equation}
In order to derive \eqref{boundary-estimate0-quantitative}, it only requires to prove the quantitative boundary estimate 
\begin{equation}
\label{boundary-estimate1-quantitative}
\begin{aligned}
	\sup_{\partial M}|\nabla^2 u| \leq C(1+\sup_{  M}|\nabla u|^2).
\end{aligned}
\end{equation}

From \cite[Proposition 3.1]{yuan-PAMS2025},
we have boundary estimate for mixed derivatives
\begin{equation}  		\label{bdry-estimate2-2}		
\begin{aligned}
	|\nabla_{\alpha n}(x_0)|\leq C  (1+\sup_M|\nabla u|), \, \forall 1\leq \alpha\leq n-1.  
\end{aligned}
\end{equation}
By 
\eqref{bdry-estimate1} and $\Gamma\subseteq\Gamma_1$, 
we see $\nabla_{nn}u(x_0)\geq -C$.  
So,
it suffices to prove the following lemma. This is a real analogue of \cite[Proposition 7.1]{yuan1-closed}.
\begin{lemma}
\label{lemma2-double-normal}
In the setting of Proposition \ref{prop2-1-real}, we have 
\begin{equation}	\label{boundary-estimate2-quantitative}
	\begin{aligned}
		\mathfrak{g}_{nn}(x_0) \leq C\Big(1+\sum_{\alpha=1}^{n-1} |\mathfrak{g}_{\alpha n}(x_0)|^2\Big). 
	\end{aligned}
\end{equation}
\end{lemma}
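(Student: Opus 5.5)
Since the estimate is the real analogue of \cite[Proposition 7.1]{yuan1-closed}, I will adapt that argument. The idea is to compare the eigenvalues of the full matrix $\mathfrak{g}(x_0)$ with those of its tangential block, using the unbounded condition \eqref{unbounded-1} and the subsolution $\underline{u}$.

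\textbf{Step 1: tangential part and the subsolution.} At $x_0$ write
\[
\mathfrak{g}=\begin{pmatrix} A & a\\ a^{T} & \mathfrak{g}_{nn}\end{pmatrix},\qquad A=(\mathfrak{g}_{\alpha\beta}),\quad a=(\mathfrak{g}_{\alpha n}).
\]
Since $u-\underline{u}=0$ on $\partial M$, we have
\[
\nabla_{\alpha\beta}(u-\underline{u})(x_0)=-\nabla_n(u-\underline{u})(x_0)\,\Pi(e_\alpha,e_\beta),
\]
where $\Pi$ is the second fundamental form. By the comparison principle $u\geq\underline{u}$, so $\nabla_n(u-\underline{u})(x_0)\geq0$. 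The quantities $|\nabla u|$ on $\partial M$ and $|A|$ are bounded by \eqref{boundary-gradient-1}-type barriers and by \eqref{bdry-estimate1}.

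Let $\underline{A}=(\underline{\mathfrak{g}}_{\alpha\beta})(x_0)$. Since $\lambda(\underline{\mathfrak{g}})\in\Gamma$, we have $\lambda'(\underline{A})\in\Gamma_\infty$. Moreover, by \eqref{unbounded-1} and the nondegeneracy \eqref{nondegeneracy1}, there is a uniform $R_1$ with $f(\lambda'(\underline{A}),R_1)\geq\psi(x_0)+c_0$ for some $c_0>0$ depending on $\inf\psi-\sup_{\partial\Gamma}f$ and the $C^2$ norm of $\underline{u}$. The aim is to show that $\lambda'(A)$ lies in a compact subset of $\Gamma_\infty$ and that $f(\lambda'(A),R)\to\sup f$ as $R\to\infty$, uniformly in $x_0$.

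\textbf{Step 2: the main obstacle.} The difficulty is that $A$ may differ from $\underline{A}$ by the term $-\nabla_n(u-\underline{u})\Pi$, which has no definite sign when $\partial M$ is not convex. I would follow the standard Trudinger/Guan device. Set
\[
m:=\min_{x\in\partial M}\Big(\lim_{R\to\infty}f\big(\lambda'(\mathfrak{g}_{\alpha\beta}(x)),R\big)-\psi(x)\Big)
\]
and assume it is attained at $x_0$. If $m\geq c_0/2$ we are done with Step 3. Otherwise, at $x_0$ one constructs the function
\[
\Phi=-\eta\,\nabla_n(u-\underline{u})+\text{(tangential terms)}+\text{barrier},
\]
where $\eta$ comes from the concavity of the map $A\mapsto\lim_R f(\lambda'(A),R)$ (a tangent-plane linearization). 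The barrier $\Psi=A_1 v+A_2\rho^2-A_3\sum_{\beta}|\nabla_\beta(u-\varphi)|^2$ is built from $v=u-\underline{u}+t\sigma-N\sigma^2$, and Lemma \ref{lemma5-key}-type dichotomies are used as in \cite{yuan-PAMS2025}. This yields $\nabla_{nn}(u-\underline{u})(x_0)\leq C$ with $C$ independent of $\sup|\nabla u|$, and from this a contradiction to $m$ being small, giving $m\geq c_1>0$ uniformly. This is the delicate part; I would try to reuse the complex proof verbatim, with the real Hessian commutation formulas replacing the complex ones.

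\textbf{Step 3: conclusion via Schur complement.} Once $\lambda'(A)$ lies in a fixed compact set $K\subset\Gamma_\infty$ with $\lim_R f(\lambda'(A),R)\geq\psi+c_1$, pick $R_0$ uniform with $f(\lambda'(A)-\epsilon\vec{\bf1}',R_0)\geq\psi+c_1/2$. By the standard eigenvalue perturbation lemma (as in \cite{CNS3}, \cite[Lemma]{yuan1-closed}), if
\[
\mathfrak{g}_{nn}\geq R_0+\frac{C}{\epsilon}\sum_\alpha|\mathfrak{g}_{\alpha n}|^2,
\]
then the eigenvalues of $\mathfrak{g}$ are within $\epsilon$ of $(\lambda'(A),\mathfrak{g}_{nn}-\ldots)$. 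Monotonicity then gives $f(\lambda(\mathfrak{g}))>\psi(x_0)$, contradicting the equation. Hence
\[
\mathfrak{g}_{nn}(x_0)\leq C\Big(1+\sum_\alpha|\mathfrak{g}_{\alpha n}|^2\Big),
\]
which is \eqref{boundary-estimate2-quantitative}.
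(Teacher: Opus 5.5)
\textbf{Verdict: there is a genuine gap, in Step 2.} Your Step 3 is sound. If you know that $\lambda'(\mathfrak g_{\alpha\beta}(x_0))$ stays a uniform distance $2\epsilon$ inside $\Gamma_\infty$, with $\epsilon$ independent of $\sup_M|\nabla u|$, then Lemma \ref{yuan's-quantitative-lemma} plus monotonicity gives \eqref{boundary-estimate2-quantitative} directly. The paper instead splits $A(R)=A'(R)+A''(R)$ and uses concavity, but it needs exactly the same input. Step 2 never produces that input.

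The quantity $m=\min_{\partial M}\big(\lim_{R\to\infty}f(\lambda'(\mathfrak g_{\alpha\beta}),R)-\psi\big)$ is Trudinger's device for the \emph{bounded} case. Here \eqref{unbounded-1} is a standing hypothesis, so $\lim_{R\to\infty}f(\lambda',R)=+\infty$ for every $\lambda'\in\Gamma_\infty$. Moreover $\lambda'(\mathfrak g_{\alpha\beta})\in\Gamma_\infty$ automatically, because $u$ is admissible. Hence:
\begin{enumerate}
\item $m\equiv+\infty$;
\item the map $A\mapsto\lim_R f(\lambda'(A),R)$ has no tangent-plane linearization from which to build $\Phi$;
\item your branch ``$m\geq c_0/2$, go to Step 3'' is always taken, yet it says nothing about how close $\lambda'(\mathfrak g_{\alpha\beta}(x_0))$ is to $\partial\Gamma_\infty$.
\end{enumerate}
In the unbounded case a lower bound on $m$ does not encode the compactness that Step 3 consumes.

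\textbf{The missing idea} is the Caffarelli--Nirenberg--Spruck device used in the paper. By \eqref{410-buchong}, $\mathfrak g_{\alpha\beta}(x_0)=(1-t_0)\underline{\mathfrak g}_{\alpha\beta}+(A_{t_0})_{\alpha\beta}$. Here $t_0$ is the first $t$, decreasing from $1$, at which $\lambda(t\underline{\mathfrak g}_{\alpha\beta}+\nabla_n(u-\underline u)\nabla_{\alpha\beta}\sigma)$ hits $\partial\Gamma_\infty$. One then proves the bound \eqref{bound-t0} on $(1-t_0)^{-1}$, following the outline of CNS and S.-Y.~Li. Since $\lambda'(\underline{\mathfrak g}_{\alpha\beta})$ lies uniformly inside $\Gamma_\infty$ and $\lambda(A_{t_0})\in\overline{\Gamma}_\infty$, this gives a margin of order $(1-t_0)\varepsilon_0$. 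Also, $t_0$ is determined by data at $x_0$ that do not involve $\mathfrak g_{\alpha n}$.

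\textbf{A further caution.} Even with the right quantity, your sketched end of Step 2 (bound $\nabla_{nn}(u-\underline u)(x_0)$, then derive a contradiction) runs through the whole Hessian at $x_0$. Its mixed entries are controlled only by $C(1+\sup_M|\nabla u|)$, via \eqref{bdry-estimate2-2}. Any such dependence in the margin would enter $\epsilon^{-1}$ and $R_0$ in Step 3 and destroy the quadratic form of the estimate. You would need to rule this out explicitly.
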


Let $\sigma$ 
be as in Subsection \ref{subsec3-boundary-c1}  the distance to
the boundary. 
At  $x_0\in\partial M$,
the boundary value condition yields  
\begin{equation}
\label{410-buchong}
\begin{aligned}
	\mathfrak{g}_{\alpha \beta}= (1-t)\underline{\mathfrak{g}}_{\alpha \beta}
	+\{t\underline{\mathfrak{g}}_{\alpha \beta}+ \nabla_n(u-\underline{u})  \nabla_{\alpha \beta} \sigma \}.
\end{aligned}
\end{equation}  
Let
$t_0$ be the first  
$t$ as we decrease $t$ from $1$  
such that
\begin{equation}
\label{key0-yuan3}
\begin{aligned} 
	\lambda(t\underline{\mathfrak{g}}_{\alpha \beta}+ \nabla_n(u-\underline{u})  \nabla_{\alpha \beta} \sigma)\in\partial\Gamma_\infty 
\end{aligned}
\end{equation}
at $x_0$,
where $\partial\Gamma_\infty$ is the boundary of $\Gamma_\infty$. 
Obviously,
\begin{equation}
\label{1-yuan3}
\begin{aligned}
	-T_0< t_0<1 
\end{aligned}
\end{equation}
for some uniform positive constant $T_0$ under control. 
Following closely the outline of \cite{CNS3} (see also \cite{LiSY2004} in the complex case),  
one may obtain  the following 
\begin{equation}
\label{bound-t0}
\begin{aligned}
	(1-t_0)^{-1}\leq  C, \textrm{ possibly depending on } \delta_{\psi,f}^{-1},
\end{aligned}
\end{equation}
where $C$  is a uniform positive constant  depending on 
$|u|_{C^0(M)}$, $|\nabla u|_{C^0(\partial M)}$, $|\underline{u}|_{C^2(M)}$, $\sup_M\psi$, 
$\partial M$ up to third derivatives and other known data.  

Denote  
\begin{equation} 	\label{yuan3-buchong6} 
\begin{aligned} 	
	\underline{\lambda}'=(\underline{\lambda}'_1,\cdots,\underline{\lambda}'_{n-1})	\equiv  \lambda( \underline{\mathfrak{g}}_{\alpha \beta}),
\end{aligned}
\end{equation} 
\begin{equation}
\label{yuan3-buchong3} 	\begin{aligned}  
	(A_{t_0})_{\alpha  \beta}= t_0\underline{\mathfrak{g}}_{\alpha \beta}+ \nabla_n(u-\underline{u})  \nabla_{\alpha \beta} \sigma, \,\,
	\tilde{\lambda}'=(\tilde{\lambda}_1',\cdots,\tilde{\lambda}_{n-1}') \equiv 	
	\lambda [(A_{t_0})_{\alpha  \beta}]. 	\end{aligned}
\end{equation} 
For $R>0$,  define the matrix  
\begin{equation}
\label{def1-AR}
\begin{aligned}
	{A}(R) =
	\begin{pmatrix}
		\mathfrak{{g}}_{\alpha \beta} &\mathfrak{g}_{\alpha  n}\\
		\mathfrak{g}_{n \beta}& R  
	\end{pmatrix}.
\end{aligned}
\end{equation}

By 
\eqref{410-buchong} we can decompose $A(R)$ as follows
\begin{equation}
\begin{aligned}
	{A}(R) = A'(R)+A''(R) 
\end{aligned}
\end{equation}
where 
\[A'(R)=\begin{pmatrix}
(1-t_0)(\mathfrak{\underline{g}}_{\alpha  \beta}-\frac{\varepsilon_0}{4}
\delta_{\alpha\beta}) &\mathfrak{g}_{\alpha  n}\\
\mathfrak{g}_{n \beta}& R/2  \nonumber
\end{pmatrix}, \mbox{  }
A''(R)=\begin{pmatrix} (A_{t_0})_{\alpha  \beta}+\frac{(1-t_0)\varepsilon_0}{4} \delta_{\alpha\beta} &0\\ 0& R/2  \end{pmatrix}.\]

Moreover, we 
need the following  lemma, which was proved by \cite{yuan1-closed} in complex case.
\begin{lemma}
\label{lemma1-bdyestimate}
There exist uniform constants $R_3$ and $C_{A''}$ depending on
$(1-t_0)^{-1}$, $\varepsilon_0^{-1}$,   $f$ and other known data under control, such that 
\begin{equation}
	\label{at0-2}
	\begin{aligned}
		f(2\lambda[A''(R_3)] )\geq  -C_{A''}.
	\end{aligned}
\end{equation}

\end{lemma}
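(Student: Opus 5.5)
We will prove the lemma by showing that the $(n-1)$-tuple of eigenvalues of the upper-left block of $2A''(R)$ lies in a fixed compact subset of $\Gamma_\infty$, and then applying the unbounded condition \eqref{unbounded-1}. Since $A''(R)$ is block diagonal, $\lambda[A''(R)]=(\tilde\lambda'+\frac{(1-t_0)\varepsilon_0}{4}\vec{\bf 1}',\,R/2)$, where $\vec{\bf 1}'=(1,\cdots,1)\in\mathbb{R}^{n-1}$. The constant $\varepsilon_0$ is presumably chosen so that $\underline{\lambda}'-\varepsilon_0\vec{\bf1}'\in\Gamma_\infty$ uniformly on $\partial M$. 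Such a choice is possible because $\underline{u}$ is admissible and $C^2$ on the compact set $\overline{M}$, and the $(n-1)$-eigenvalues of the tangential block of an admissible matrix lie in $\Gamma_\infty$.

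First, by the definition \eqref{key0-yuan3} of $t_0$, we have $\tilde\lambda'\in\partial\Gamma_\infty$. Moreover, \eqref{1-yuan3} together with the bounds on $|\nabla u|_{C^0(\partial M)}$, $|\underline u|_{C^2}$ and the geometry of $\partial M$ gives $|\tilde\lambda'|\le C$. Next, $\Gamma_\infty$ is an open convex cone containing $\Gamma_{n-1}$, and $\tilde\lambda'\in\overline{\Gamma}_\infty$. Hence adding a positive multiple of $\vec{\bf1}'$ moves the point strictly inside, uniformly: $\mu':=\tilde\lambda'+\frac{(1-t_0)\varepsilon_0}{4}\vec{\bf1}'$ satisfies $\mathrm{dist}(\mu',\partial\Gamma_\infty)\ge c\,(1-t_0)\varepsilon_0$ for some dimensional $c>0$. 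This is because $\Gamma_\infty+\Gamma_{n-1}\subseteq\Gamma_\infty$ and the ball of radius $c s$ around $s\vec{\bf1}'$ lies in $\Gamma_{n-1}$. By \eqref{bound-t0}, $\mu'$ therefore ranges over the compact set $K=\{\mu'\in\Gamma_\infty:|\mu'|\le C,\ \mathrm{dist}(\mu',\partial\Gamma_\infty)\ge c_0\}$, where $c_0$ depends only on $(1-t_0)^{-1}$ and $\varepsilon_0^{-1}$.

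Now, for each $\mu'\in K$, by definition of $\Gamma_\infty$ there is $R_{\mu'}$ with $(2\mu',R)\in\Gamma$ for all $R\ge R_{\mu'}$; since $\Gamma$ is open and the half-line property holds (as $\Gamma+\Gamma_n\subseteq\Gamma$), a compactness argument gives a uniform $R_K$. Fix a reference value, say $f(\vec{\bf1})$. By \eqref{unbounded-1}, for each $\mu'\in K$ there is $R$ with $f(2\mu',R)> f(\vec{\bf1})$. Using monotonicity in the last variable from \eqref{elliptic}, continuity of $f$, and a finite subcover of $K$, we obtain a uniform $R_3$ with $f(2\mu',R_3)\ge f(\vec{\bf 1})$ for all $\mu'\in K$. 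Setting $C_{A''}=|f(\vec{\bf1})|$ then yields \eqref{at0-2}.

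The main obstacle is the uniform interiority step: we must ensure that $\mu'$ stays a controlled distance from $\partial\Gamma_\infty$ and remains bounded, so that the pointwise limit in \eqref{unbounded-1} can be upgraded to a uniform statement. This uniformity is precisely where $(1-t_0)^{-1}$ and $\varepsilon_0^{-1}$ enter. The compactness argument needs care because the sets $\{\mu': f(2\mu',R)>f(\vec{\bf1})\}$ are open and increase in $R$, by \eqref{elliptic}, so they cover $K$ and a single $R_3$ suffices.
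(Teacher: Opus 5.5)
Your proof is correct. Its first half matches the paper's: $\tilde\lambda'$ lies in the bounded set $\{\lambda'\in\overline{\Gamma}_\infty: |\lambda'|\le C_0\}$, adding a multiple of $(1-t_0)\varepsilon_0\vec{\bf 1}'$ pushes it uniformly into $\Gamma_\infty$, and compactness yields a uniform threshold in the last slot.

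The routes differ only in the final step.
\begin{itemize}
\item The paper splits the shift in half. It picks $R_2$ with $(\tilde\lambda'+\frac{(1-t_0)\varepsilon_0}{8}\vec{\bf 1}',R_2/2)\in\Gamma$ and sets $R_3=R_2+\frac{(1-t_0)\varepsilon_0}{4}$, which gives $\lambda[A''(R_3)]\in\Gamma+\frac{(1-t_0)\varepsilon_0}{8}\vec{\bf 1}$. Since this tuple is also bounded, $2\lambda[A''(R_3)]$ lies in a fixed compact subset of $\Gamma$. Continuity of $f$ then gives the lower bound, with $C_{A''}$ depending on $(1-t_0)^{-1}$ and $\varepsilon_0^{-1}$.
\item You instead use \eqref{unbounded-1} and a second compactness argument to push $f(2\mu',R_3)$ above the fixed value $f(\vec{\bf 1})$. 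This is legitimate, since the lemma sits in the setting of Proposition \ref{prop2-1-real}, where \eqref{unbounded-1} is assumed. It buys a universal constant $C_{A''}=|f(\vec{\bf 1})|$, independent of $t_0$ and $\varepsilon_0$.
\end{itemize}

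Your extra step is not needed for the lemma as stated. Once you have your uniform $R_K$, the set $\{(2\mu',R_K):\mu'\in K\}$ is already a compact subset of $\Gamma$, being the continuous image of your compact $K$. Continuity of $f$ then finishes the proof with $R_3=R_K$, which is essentially the paper's argument.

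One downstream point. In the proof of Lemma \ref{lemma2-double-normal}, the paper needs $R_c\ge R_3$ in order to use monotonicity in the last entry of $A''$. It gets this by building $2R_2$ into $R_c$. With your possibly much larger $R_3$, you would add $R_3$ to the definition of $R_c$; this change is harmless.
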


\begin{proof}

We include the proof, following \cite{yuan1-closed}.
Let     	
$\underline{\lambda}'$ and 
$\tilde{\lambda}'$ be as in \eqref{yuan3-buchong6} and \eqref{yuan3-buchong3}, respectively.
One can see that 
there is a uniform constant $C_0>0$ depending on $|t_0|$,
$\sup_{\partial M}|\nabla u|$ and other known data, such that $|\tilde{\lambda}'|\leq   C_0$, that is
\begin{equation}
	\label{yuan3-buchong4v}
	\begin{aligned}
		\tilde{\lambda}'\in {K}\equiv \{\lambda'\in \overline{\Gamma}_\infty: |\lambda'|\leq   C_0\}.  \nonumber
	\end{aligned}
\end{equation}
There is a 
positive constant $R_2$ depending on $((1-t_0)\varepsilon_0)^{-1}$, $K$ and other known data, 
such that  
\begin{equation}
	\label{key-74}
	\begin{aligned}
		\lambda	\Big[\begin{pmatrix} (A_{t_0})_{\alpha  \beta}+\frac{(1-t_0)\varepsilon_0}{8} \delta_{\alpha\beta} &0\\ 0& R_2/2  \end{pmatrix}\Big] \in \Gamma.
	\end{aligned}
\end{equation}
Let $R_3=R_2+ \frac{(1-t_0)\varepsilon_0}{4}$, we have
\begin{equation}
	\label{at0-1}
	\begin{aligned}
		\lambda[A''(R_3)] \in \Gamma+\frac{(1-t_0)\varepsilon_0}{8} \vec{\bf1}.
	\end{aligned}
\end{equation} 
Notice that $A''(R_3)$ is bounded.
There exists a uniform constant $C_{A''}$ depending on  $(1-t_0)^{-1}$, $\varepsilon_0^{-1}$, $R_3$  and $f$, such that
\eqref{at0-2} holds.

\end{proof}

\begin{proof}
[Proof of Lemma \ref{lemma2-double-normal}]
We follow closely \cite{yuan2017,yuan2019,yuan1-closed}.
By \eqref{unbounded-1} and \eqref{bound-t0},
there is a uniform positive constant $R_1$  
such that  	 
\begin{equation}
	\label{key-03-yuan3}
	\begin{aligned} 	 f\left(2(1-t_0)(\underline{\lambda}'_{1}-{\varepsilon_0}/2),\cdots, 2(1-t_0)(\underline{\lambda}'_{n-1}
		-{\varepsilon_0}/2), 2R_1\right) \geq   2f(\underline{\lambda})+ {C_{A''}}, 
	\end{aligned}
\end{equation}
and  $(\underline{\lambda}'_1-\varepsilon_{0}/2, \cdots, \underline{\lambda}'_{n-1}-\varepsilon_0/2, {(1-t_0)^{-1}}{R_1})\in \Gamma.$

Let's pick  $\epsilon=\frac{(1-t_0)\varepsilon_0}{4}$ in 
Lemma  \ref{yuan's-quantitative-lemma}. And then 
we set
\begin{equation}
	\label{def1-R-c}
	\begin{aligned}
		R_c\equiv  \,&
		\frac{8(2n-3)}{(1-t_0)\varepsilon_0}\sum_{\alpha=1}^{n-1} | \mathfrak{g}_{\alpha  n}|^2 	 	  
		+ 2(n-1) (1-t_0) \sum_{\alpha=1}^{n-1}|{\underline{\lambda}_\alpha'}|
		 \\\,&	
		+\frac{n(n-1)(1-t_0)\varepsilon_0}{2}
		+2R_1+2R_2,   \nonumber 
	\end{aligned}
\end{equation}
where $\varepsilon_0$, $R_1$ and $R_2$ are the constants we 
fixed in \eqref{key-03-yuan3} and \eqref{at0-1}.
According to Lemma \ref{yuan's-quantitative-lemma},
the eigenvalues $\lambda[{A}'(R_c)]$ 
(possibly with an appropriate order)
shall behave like
\begin{equation}
	\label{lemma12-yuan}
	\begin{aligned}
		\lambda_{\alpha}[{A}'(R_c)]\geq   (1-t_0)  (\underline{\lambda}'_{\alpha}-{\varepsilon_0}/{2}),
		 \,  1\leq   \alpha\leq   n-1;
		\,\, 
			\lambda_{n}[{A}'(R_c)]\geq  R_c/2.
	\end{aligned}
\end{equation} 	 
In particular, $\lambda[{A}'(R_c)]\in \Gamma$. So $\lambda[A(R_c)]\in \Gamma$.  

Using the concavity of $f$,  Lemma  \ref{lemma1-bdyestimate}
and \eqref{key-03-yuan3},  
\eqref{lemma12-yuan}, we deduce that
\begin{equation}
	\label{yuan-k1}
	\begin{aligned}
		f(\lambda[A(R_c)])\geq  
		\frac{1}{2}f(2\lambda[A'(R_c)])+\frac{1}{2}f(2\lambda[A''(R_c)])  
		\geq  f(\underline{\lambda})\geq  \psi. \nonumber
	\end{aligned}
\end{equation} 
Thus $\mathfrak{g}_{n n} \leq  R_c.$   
By \eqref{bound-t0}, $(1-t_0)^{-1}$ is bounded.
The proof is now complete.

\end{proof}

\begin{proof}
[Proof  of Proposition \ref{prop2-1-real}]

By \eqref{upper-bound1} and  subsolution assumption, we get
\begin{align*}
	\sup_M|u|+\sup_{\partial M} |\nabla u|\leq C.
\end{align*} 
Based on \eqref{boundary-estimate0-quantitative}, without using the assumption \eqref{addistruc},
we  use the blow-up argument   
 developed in \cite{yuan1-closed}, 
extending that of  \cite{Dinew2017Kolo,Gabor} to  optimal setting, 
to derive   gradient estimate 
\begin{align*}
	\sup_M|\nabla u|\leq C.
\end{align*}  
(The details can also be found in \cite[Section 5]{yuan-PAMS2025}.) 
In conclusion, we get \eqref{estimate1-Hessian-real}.
\end{proof}

\section{A continuity method and proof of Theorem  \ref{thm1-exietnce-Dirichlet-real}  }
\label{sec2-contunuitymethod-Dirichlet}

We now give a continuity method for  Dirichlet problem \eqref{equ1-real}-\eqref{boundary-data1}. 
Our method is based on Theorems  \ref{thm1-type2} and \ref{thm1-approx}.
Let $(f,\Gamma)$ be the pair corresponding to  \eqref{equ1-real}. 
If $(0,\cdots,0,1)\in \mathring{\Gamma}_{\mathcal{G}}^f$, then we are done as in Theorem  \ref{thm1-type2}.
From now on, we assume $(0,\cdots,0,1)\in \partial{\Gamma}_{\mathcal{G}}^f$ and then take  the pair  $(f^t,\Gamma^t)$ defined as in \eqref{pair1-t}. 
For each $t\in [1,2]$, consider  
\begin{equation}
\label{equation2-t}
\begin{aligned} 
	{f}^t (\lambda[\nabla^2 u+\chi]) =\psi \textrm{ in } \overline{M}, \,\,
	u=\varphi \textrm{ on } \partial M. 
\end{aligned}
\end{equation}

When $t>1$, $(f^t,\Gamma^t)$ is of type 2 according to Theorem \ref{thm1-approx}. Then by Theorem  \ref{thm1-type2}, for each $t>1$
\eqref{equation2-t} has a unique smooth solution $u^t$ with  
$\lambda[\nabla^2u^t+\chi]\in {\Gamma}^t$.

Let $\underline{u}$  be an admissible subsolution to 
the Dirichlet problem 
\eqref{equ1-real}-\eqref{boundary-data1}. 
By (3) and (1) of Theorem \ref{thm1-approx},  $\underline{u}$ is also an admissible subsolution to \eqref{equation2-t} for all $1\leq t\leq 2$
\begin{equation}
\begin{aligned}
	{f}^t(\lambda[\nabla^2\underline{u}+\chi])
	\geq \psi, 
	\,\, \lambda[\nabla^2\underline{u}+\chi]\in
	\Gamma^t  \textrm{ in } \overline{M}. \nonumber
\end{aligned}
\end{equation} 
By \eqref{unbounded-1}, 
Lemma \ref{lemma1-PUE} and Theorem \ref{thm1-approx} ($t>1$), 
$(f^t,\Gamma^t)$    uniformly obeys   \eqref{unbounded-1} for   $t\in[1,2]$. 
By   Proposition \ref{prop2-1-real}, 
for $t\in (1,2]$  there is some $\alpha\in (0,1)$ such that 
\begin{equation}
\begin{aligned}
	|u^t|_{C^{4,\alpha}(\overline{M})} \leq C, \textrm{ independently of } t. 
	\nonumber
\end{aligned}
\end{equation}
Passing to a proper subsequence as $t\to 1^+$, we obtain  a $C^{4,\alpha}$-admissible solution $u$ to the Dirichlet problem \eqref{equ1-real}-\eqref{boundary-data1}. 
The Schauder theory implies that $u$ is indeed smooth. The uniqueness follows from the maximum principle.


\begin{appendix}

\section{Useful lemmas}
\label{appendix1}

The following lemma is a key ingredient  
 for the proof of Proposition \ref{prop2-1-real}.


\begin{lemma}
	[\cite{yuan2017,yuan2019,yuan1-closed}]
	\label{yuan's-quantitative-lemma}
	Let $A$ be an $n\times n$ 
	symmetric matrix
	\begin{equation}\label{matrix3}\left(\begin{matrix}
			d_1&&  &&a_{1}\\ &d_2&& &a_2\\&&\ddots&&\vdots \\ && &  d_{n-1}& a_{n-1}\\
			a_1&  a_2&\cdots&  a_{n-1}& \mathrm{{\bf a}} 
		\end{matrix}\right)\end{equation}
	with $d_1,\cdots, d_{n-1}, a_1,\cdots, a_{n-1}$ fixed, and with $\mathrm{{\bf a}}$ variable.
	Denote the eigenvalues of $A$ by $\lambda=(\lambda_1,\cdots, \lambda_n)$.
	Let $\epsilon>0$ be a fixed constant.
	Suppose that  the parameter $\mathrm{{\bf a}}$ in $A$ satisfies  the quadratic
	growth condition  
	\begin{equation}
		\begin{aligned}
			\label{guanjian1-yuan}
			\mathrm{{\bf a}}\geq \frac{2n-3}{\epsilon}\sum_{i=1}^{n-1}|a_i|^2 +(n-1)\sum_{i=1}^{n-1} |d_i|+ \frac{(n-2)\epsilon}{2n-3}.
		\end{aligned}
	\end{equation}
	Then the eigenvalues (possibly with a proper permutation) behave like
	\begin{equation}
		\begin{aligned}
			|d_{\alpha}-\lambda_{\alpha}|
			<   \epsilon, \mbox{ } \forall 1\leq \alpha\leq n-1;\,\,
			0\leq \lambda_{n}-\mathrm{{\bf a}}
			< (n-1)\epsilon. \nonumber
		\end{aligned}
	\end{equation}
\end{lemma}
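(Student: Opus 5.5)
The plan is to combine Cauchy interlacing, the trace identity, and the secular equation for the arrow-shaped matrix \eqref{matrix3}, which turns the whole statement into a single scalar estimate on $\lambda_n-\mathrm{{\bf a}}$.

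\emph{Step 1: ordering.} After relabeling the first $n-1$ coordinates, assume $d_1\leq\cdots\leq d_{n-1}$. The upper-left $(n-1)\times(n-1)$ principal submatrix is $\mathrm{diag}(d_1,\dots,d_{n-1})$. Cauchy interlacing then gives, for the ordered eigenvalues $\lambda_1\leq\cdots\leq\lambda_n$,
\[
\lambda_1\leq d_1\leq\lambda_2\leq d_2\leq\cdots\leq d_{n-1}\leq\lambda_n .
\]
In particular $d_\alpha-\lambda_\alpha\geq 0$ for every $\alpha\leq n-1$.

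\emph{Step 2: lower bound on $\lambda_n$.} By the Rayleigh quotient at $e_n$, we have $\lambda_n\geq e_n^TAe_n=\mathrm{{\bf a}}$.

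\emph{Step 3: trace identity.} Since $\sum_{i=1}^n\lambda_i=\sum_{\alpha=1}^{n-1}d_\alpha+\mathrm{{\bf a}}$, we get
\[
\sum_{\alpha=1}^{n-1}(d_\alpha-\lambda_\alpha)=\lambda_n-\mathrm{{\bf a}}.
\]
Each summand on the left is nonnegative by Step 1. Hence it suffices to prove the single estimate $\lambda_n-\mathrm{{\bf a}}<\epsilon$. This yields $0\leq d_\alpha-\lambda_\alpha<\epsilon$ for every $\alpha$, and also $0\leq\lambda_n-\mathrm{{\bf a}}<\epsilon\leq(n-1)\epsilon$, which is what is claimed.

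\emph{Step 4: the estimate on $\lambda_n-\mathrm{{\bf a}}$.} This is the only real computation. From \eqref{guanjian1-yuan} and $|d_i|\leq\sum_j|d_j|$, we get for each $i$
\[
\mathrm{{\bf a}}-d_i\geq \frac{2n-3}{\epsilon}\sum_{j}|a_j|^2+\frac{(n-2)\epsilon}{2n-3}>0 ,
\]
and therefore $\lambda_n\geq\mathrm{{\bf a}}>\max_i d_i$. For $\lambda$ distinct from all $d_i$, expanding the determinant of the arrow matrix gives
\[
\det(A-\lambda I)=\prod_i(d_i-\lambda)\Big(\mathrm{{\bf a}}-\lambda-\sum_i\frac{a_i^2}{d_i-\lambda}\Big).
\]
Evaluating at $\lambda=\lambda_n$ yields the secular equation
\[
\lambda_n-\mathrm{{\bf a}}=\sum_i\frac{a_i^2}{\lambda_n-d_i}\leq\sum_i\frac{a_i^2}{\mathrm{{\bf a}}-d_i}.
\]
If all $a_i=0$, the right-hand side is $0$ and we are done. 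Otherwise each denominator is at least $\frac{2n-3}{\epsilon}\sum_j a_j^2$, so the sum is at most $\epsilon/(2n-3)<\epsilon$.

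\emph{Expected obstacle.} I expect the main care to be needed in justifying the secular equation at $\lambda_n$, i.e.\ checking that $\lambda_n$ is not one of the $d_i$. Step 4 shows $\lambda_n\geq\mathrm{{\bf a}}>\max d_i$, which settles this even when some $d_i$ are repeated. Everything else is routine bookkeeping with the constants in \eqref{guanjian1-yuan}.
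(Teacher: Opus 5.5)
Your argument is correct, apart from a small edge case for $n=2$ noted below. It takes a different and cleaner route than the one behind the statement.

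The paper gives no proof of its own and defers to \cite{yuan2017,yuan2019,yuan1-closed}. The shape of the lemma suggests how that argument goes: the conclusion $\lambda_n-\mathbf{a}<(n-1)\epsilon$ is $n-1$ copies of $\epsilon$, and the growth condition \eqref{guanjian1-yuan} contains the generous terms $(n-1)\sum_i|d_i|$ and $\frac{(n-2)\epsilon}{2n-3}$. This points to controlling each $|d_\alpha-\lambda_\alpha|$ separately, where eigenvalues may sit near possibly repeated or clustered $d_i$, and only afterwards summing via the trace to locate $\lambda_n$.

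You reverse the order. Interlacing gives the pairing and the signs $d_\alpha-\lambda_\alpha\geq 0$ for free. The trace identity then collapses all $n-1$ estimates into the single quantity $\lambda_n-\mathbf{a}$. You evaluate that quantity with the secular equation at $\lambda_n$, the one eigenvalue far from every $d_i$, so all denominators are large and no cluster analysis is needed.

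This gives a stronger lemma. Your argument (with the strictness remark below) needs only $\mathbf{a}\geq \max_i d_i+\frac{1}{\epsilon}\sum_i a_i^2$. It yields $\sum_{\alpha}|d_\alpha-\lambda_\alpha|=\lambda_n-\mathbf{a}<\epsilon$ rather than $(n-1)\epsilon$. Since \eqref{guanjian1-yuan} implies your hypothesis, your proof can replace the cited one verbatim where the lemma is used in Lemma~\ref{lemma2-double-normal}.

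The edge case arises because for $n=2$ one has $\frac{(n-2)\epsilon}{2n-3}=0$ and $2n-3=1$.
\begin{itemize}
\item If $a_1=0$, your claimed strict inequality $\mathbf{a}-d_1>0$ can fail; take $d_1=\mathbf{a}=1$. Then $\lambda_n=d_1$ and the secular equation at $\lambda_n$ is unavailable. Dispose of the case $a=0$ before Step 4: the matrix is diagonal, and $\lambda_n=\mathbf{a}$ because $\mathbf{a}\geq\max_i d_i$.
\item If $a\neq 0$, your final bound $\epsilon/(2n-3)$ equals $\epsilon$, so the strict inequality $\lambda_n-\mathbf{a}<\epsilon$ is not yet proved. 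Get strictness from the secular equation itself: it gives $\lambda_n-\mathbf{a}=\sum_i a_i^2/(\lambda_n-d_i)>0$. Hence $\lambda_n-d_i>\mathbf{a}-d_i$, and the comparison with $\sum_i a_i^2/(\mathbf{a}-d_i)$ is strict for every $i$ with $a_i\neq 0$.
\end{itemize}
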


The above lemma was  proposed 
in  \cite{yuan2017,yuan2019}.
We also refer the interested reader to  
\cite{yuan1-closed}
for a detailed proof. 

Below we also present a lemma regarding subsolution. 
Denote $\partial\Gamma^\sigma:=\{\lambda\in\Gamma: f(\lambda)=\sigma\}.$

\begin{lemma}
	[\cite{yuan1-closed}]
	\label{lemma5-key}
	Assume   \eqref{concave} and  \eqref{elliptic} hold.
	Fix $\sigma\in (\sup_{\partial \Gamma}f,\sup_\Gamma f)$. 
	Suppose   $\mu\in\mathbb{R}^{n}$ 
	obeys for some  $\delta$, $R>0$ that
	\begin{equation}
		\label{set2-bound-2}
		\begin{aligned} 
			(\mu-2\delta \vec{\bf 1}+\Gamma_n) \cap \partial\Gamma^\sigma \subset B_R(0). 
		\end{aligned}
	\end{equation} 
	Then there is a positive constant $\varepsilon$ depending on  $\sigma$, $\mu$, $\delta$, $R$ such that if $\lambda\in\partial\Gamma^\sigma$, then either
	\begin{equation}
		\label{inequ-case1-2}
		\begin{aligned}
			\sum_{i=1}^n f_i(\lambda) (\mu_i-\lambda_i) \geq  \varepsilon \big(1+\sum_{i=1}^n f_i(\lambda)\big)   
		\end{aligned}
	\end{equation}
 or 
	\begin{equation}
		\label{inequ-case2-2}
		\begin{aligned}
			f_j(\lambda) \geq  \varepsilon \big(1+\sum_{i=1}^n f_i(\lambda) \big), \, \forall 1\leq  j\leq  n.
		\end{aligned}
	\end{equation}
	
\end{lemma}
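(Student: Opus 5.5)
We argue geometrically on the convex superlevel set $\Gamma^\sigma:=\{\lambda\in\Gamma: f(\lambda)\geq\sigma\}$, following the idea of Sz\'ekelyhidi's subsolution proposition. For $\lambda\in\partial\Gamma^\sigma$ put $\nu=Df(\lambda)/|Df(\lambda)|$. By \eqref{elliptic} $\nu\in\Gamma_n$ and $\sum_i\nu_i\geq 1$. By \eqref{concave} the set $\Gamma^\sigma$ lies in the half-space $\{x:\nu\cdot(x-\lambda)\geq 0\}$, and more quantitatively
\[
\sum_i f_i(\lambda)(x_i-\lambda_i)\geq f(x)-\sigma \quad\text{for all } x\in\Gamma .
\]
We fix a small $\kappa>0$, depending only on $\sigma,\mu,\delta,R$, and split according to whether $\min_j\nu_j\geq\kappa$.

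\emph{Case A: $\min_j\nu_j\geq\kappa$.} Then $f_j(\lambda)\geq\kappa|Df(\lambda)|\geq\frac{\kappa}{\sqrt n}\sum_i f_i(\lambda)$ for every $j$. This gives \eqref{inequ-case2-2} except for the additive constant, so we still need $\sum_i f_i\geq c>0$ in this case. To get it, pick $\sigma'\in(\sigma,\sup_\Gamma f)$ and a point $p\in\Gamma$ with $f(p)=\sigma'$. Concavity gives $\sigma'-\sigma\leq Df(\lambda)\cdot(p-\lambda)$. If $|Df|$ were tiny while all $\nu_j\geq\kappa$, then $\nu\cdot(p-\lambda)$ would have to be huge, i.e.\ $\lambda$ would lie far in the $-\Gamma_n$-ish direction relative to $p$. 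We will rule this out using the hypothesis \eqref{set2-bound-2}, or else absorb such $\lambda$ into Case B. I expect to handle this together with Case B by enlarging the constants.

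\emph{Case B: $\nu_j<\kappa$ for some $j$.} Consider the points $y_T=\mu-\delta\vec{\bf 1}+Te_j$. Since $(\mu-2\delta\vec{\bf 1}+\Gamma_n)\cap\partial\Gamma^\sigma\subset B_R(0)$ and $f$ is increasing along $\Gamma_n$, there is a fixed $T_0$ (about $2R+|\mu|$) such that the whole $\delta$-neighbourhood of $y_{T_0}$ lies in $\Gamma$ with $f\geq\sigma$. The ray from $\mu-2\delta\vec{\bf1}$ in direction $e_j$ can cross $\partial\Gamma^\sigma$ only inside $B_R$, so beyond $B_R$ it stays in $\Gamma^\sigma$; the $\delta$-shift moves it into the interior. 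Compactness of this neighbourhood then gives a margin $\tau>0$ with $f(y_{T_0})\geq\sigma+\tau$ and $f(y_{T_0}-\delta\vec{\bf 1}/2)\geq\sigma$. Concavity gives
\[
Df(\lambda)\cdot(y_{T_0}-\lambda)\geq\tau,
\]
and applying the same inequality at $y_{T_0}-\tfrac{\delta}{2}\vec{\bf1}$ gives
\[
Df(\lambda)\cdot(y_{T_0}-\lambda)\geq\tfrac{\delta}{2}\sum_i f_i(\lambda).
\]
Now
\[
Df\cdot(\mu-\lambda)=Df\cdot(y_{T_0}-\lambda)+\delta\sum_i f_i-T_0f_j,
\]
and $f_j=\nu_j|Df|<\kappa|Df|\leq\kappa\sum_i f_i$. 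Choosing $\kappa\leq\delta/(4T_0)$ we obtain
\[
Df\cdot(\mu-\lambda)\geq\tfrac12\big(\tau+\tfrac{\delta}{2}\textstyle\sum_i f_i\big)+\tfrac{3\delta}{4}\sum_i f_i-\tfrac{\delta}{4}\sum_i f_i ,
\]
which is \eqref{inequ-case1-2} with $\varepsilon=\min\{\tau/2,\delta/4\}$.

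\emph{Finishing Case A.} The same computation also shows the following: whenever $Df\cdot(\mu-\lambda)<\varepsilon(1+\sum_i f_i)$, all $\nu_j\geq\kappa$. In that situation $\tau\leq Df\cdot(y_{T_0}-\lambda)\leq\varepsilon(1+\sum_i f_i)+(T_0+\delta\sqrt n)|Df|$, which for $\varepsilon<\tau/2$ forces $\sum_i f_i\geq|Df|\geq c(\tau,T_0,\delta)$. This supplies the additive constant in \eqref{inequ-case2-2} after shrinking $\varepsilon$.

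\emph{Main obstacle.} The delicate step is producing the uniform margin $\tau$ and $T_0$ from the hypothesis \eqref{set2-bound-2}: we must check that the point $\mu-\delta\vec{\bf1}+T_0e_j$, together with a small neighbourhood, really lies in $\Gamma$ with $f>\sigma$, uniformly in $j$. I would argue via monotonicity of $f$ along $e_j$ and the boundedness of the intersection, taking a minimum over the finitely many $j$ and over a compact neighbourhood. Everything else is concavity bookkeeping.
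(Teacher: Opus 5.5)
Your proposal is correct and is essentially the paper's argument: both apply concavity at the test point $\mu-\delta\vec{\bf 1}+Te_j$ (which lies in $\Gamma$ with $f>\sigma$) and split according to whether the $j$-th derivative is small; the paper instead compares $f_n$ with the affine threshold $\frac{\delta_0}{2R_0}+\frac{\delta}{2R_0}\sum_i f_i$, which builds the additive $1$ of \eqref{inequ-case2-2} into the dichotomy and makes your ``Finishing Case A'' step and your second test point $y_{T_0}-\frac{\delta}{2}\vec{\bf 1}$ unnecessary (already $\delta\sum_i f_i-T_0f_j\geq\frac{3\delta}{4}\sum_i f_i$). The step you flag is only asserted in the paper, and monotonicity along $e_j$ plus boundedness of the intersection do not by themselves show that your ray ever enters $\{f>\sigma\}$ (that ray even lies on the boundary of $\mu-2\delta\vec{\bf 1}+\Gamma_n$); it is settled by sliding $x=\mu-\delta\vec{\bf 1}+Te_j$ with $x_j>R$ along $x+s\vec{\bf 1}$: since $f<\sigma$ near $\partial\Gamma$ and $f(x+s\vec{\bf 1})\to\sup_\Gamma f>\sigma$, if $x\notin\{f>\sigma\}$ there would be $s^*\geq 0$ with $x+s^*\vec{\bf 1}\in(\mu-2\delta\vec{\bf 1}+\Gamma_n)\cap\partial\Gamma^\sigma$ and $|x+s^*\vec{\bf 1}|\geq x_j>R$, contradicting \eqref{set2-bound-2}.
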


\begin{proof}  
	
	For self-sufficiency, we include the details,  following the  proof of \cite{yuan1-closed}.
	Denote $\tilde{\mu}=\mu-\delta \vec{\bf 1}$. By \eqref{set2-bound-2},
	there is a  positive constant $R_0$ depending only on $\tilde{\mu}$ and $R$ such that 
	\begin{equation}
		\label{key1-proof-lemma3.2}
		\begin{aligned}  
			f(\tilde{\mu}+R_0\vec{\bf e}_i)>\sigma, \,\, \tilde{\mu}+R_0\vec{\bf e}_i\in\Gamma, \,\, \forall 1\leq  i\leq  n, 
		\end{aligned}
	\end{equation} 
	where $\vec{\bf e}_i$ 
	is the $i$-th standard basis vector in $\mathbb{R}^n$. 
	Denote $\delta_0=\underset{1\leq  i\leq  n}{\min}\{f(\tilde{\mu}+R_0\vec{\bf e}_i)-\sigma\}$.   
	Below we prove the conclusion   for $\varepsilon= \min\{ \frac{\delta}{2}, \frac{\delta}{2R_0}, \frac{\delta_0}{2}, \frac{\delta_0}{2R_0}\}.$
	Without loss of generality,  
	we assume 
	$\lambda_1\leq  \cdots\leq \lambda_n$. Then $f_1(\lambda)\geq  \cdots \geq  f_n(\lambda).$ 
	We   get \eqref{inequ-case2-2} if
	$f_n(\lambda)>\frac{\delta_0}{2R_0} +\frac{\delta}{2R_0}  \sum_{i=1}^n f_i(\lambda). $
	From now on we assume  $\frac{\delta_0}{2}+	\frac{\delta}{2}  \sum_{i=1}^n f_i(\lambda) \geq  R_0f_n(\lambda). $ 
	Combining this assumption with \eqref{concave-1}  we have
	\begin{equation} 
		\begin{aligned}
			\sum_{i=1}^n f_i(\lambda) (\mu_i-\lambda_i)    
			=\,&   	\sum_{i=1}^n f_i(\lambda) (\tilde{\mu}_i-\lambda_i) 
			+\delta\sum_{i=1}^n f_i(\lambda)   
			\\ 
			\geq \,& 
			f(\tilde{\mu}+R_0\vec{\bf e}_n)-f(\lambda)-R_0f_n(\lambda) +\delta \sum_{i=1}^n f_i(\lambda)
			\\ 
			\geq  \,& 
			\frac{\delta_0}{2}+ \frac{\delta}{2} \sum_{i=1}^n f_i(\lambda).   \nonumber
		\end{aligned}
	\end{equation}  
	
\end{proof}

\begin{remark}	According to \cite[Lemma 3.1]{yuan1-closed}, the right-hand sides of both inequalities \eqref{inequ-case1-2} and \eqref{inequ-case2-2} can be strengthened to $\epsilon\big(1+\sum_{i=1}^n f_i(\lambda)+|\sum_{i=1}^nf_i(\lambda)\lambda_i|\big).$\end{remark}

\end{appendix}

\bibliographystyle{amsplain}



\end{document}